\definecolor{dgreen}{rgb}{0,0.6,0.1}
\definecolor{dred}{rgb}{0.8,0,0}
\definecolor{dmagenta}{rgb}{0.6,0,0.6}
\definecolor{dmag}{rgb}{0.8,0,0.9}
\definecolor{dblue}{rgb}{0,0,0.7}
\definecolor{dbrown}{rgb}{0.8,0.25,0.25}
\definecolor{doran}{rgb}{1,0.4,0}
\newtheorem{thm}{Theorem}[section]
\newtheorem{prop}[thm]{Proposition}
\newtheorem{remark}{Remark}
\newcommand{\vshortarrow}[1]{\xrightarrow[\hspace*{0.03cm}]{\raisebox{1mm}{\normalsize$#1$}}}
\newcommand\Om{{\Omega}}
\newcommand\E{{\PP}}
\newcommand\bgrad{\textrm{\bf grad}}
\renewcommand\div{{\rm div}}
\newcommand\bcurl{\textrm{\bf curl}}
\newcommand\curl{\textrm{\bf curl}}
\newcommand\brot{\textrm{\bf rot}}
\newcommand\rot{{\rm rot}}
\newcommand{\dO}{\,{\rm d}\Omega}
\newcommand{\dE}{\,{\rm d}\E}
\newcommand{\dPP}{\,{\rm d}\PP}
\newcommand{\ds}{\,{\rm d}s}
\newcommand{\dS}{\,{\rm d}S}
\newcommand{\df}{\,{\rm d}f}
\newcommand\nn{\boldsymbol n}
\renewcommand\tt{\boldsymbol t}
\newcommand\xxP{{\boldsymbol x}_{\PP}}
\newcommand\xxf{{\boldsymbol x}_{f}}
\newcommand{\X}{\textnormal{$\mathbf{x}$}}
\newcommand\qq{\boldsymbol q}
\newcommand\bphi{\boldsymbol \varphi}
\newcommand\vv{\boldsymbol v}
\renewcommand\qq{\boldsymbol q}
\newcommand\pp{\boldsymbol p}
\newcommand\HH{\boldsymbol H}
\newcommand\BB{\boldsymbol B}
\newcommand\jj{\boldsymbol j}
\newcommand\xx{\boldsymbol x}
\newcommand\ww{\boldsymbol w}
\renewcommand{\O}{ {\mathcal O}}
\newcommand\Th{{\mathcal T}_h}
\newcommand\R{\mathbb{R}}
\renewcommand{\P}{ {\mathbb P}}
\newcommand\PP{{\text P}}
 \numberwithin{equation}{section}
\newcommand{\node}{\rm{n}}
\newcommand{\edge}{\rm{e}}
\newcommand{\face}{\rm{f}}
\newcommand{\vol}{\rm{v}}
\newcommand\fpsP{V^{\face}_{k-1}(\PP)}
\newcommand\epsf{V^{\edge}_{k}(f)}
\newcommand\epsmP{{\edge,\mu,\PP}}
\definecolor{dgreen}{rgb}{0,0.6,0.1}
\definecolor{dred}{rgb}{0.8,0,0}
\definecolor{dmagenta}{rgb}{0.6,0,0.6}
\definecolor{dmag}{rgb}{0.8,0,0.9}
\definecolor{dblue}{rgb}{0,0,0.7}
\definecolor{dbrown}{rgb}{0.8,0.25,0.25}
\definecolor{doran}{rgb}{1,0.4,0}
\begin{document}
\begin{frontmatter}
\title{A family of three-dimensional virtual elements with applications to magnetostatic.}

\author[unimib,imati]{L. Beir\~ao da Veiga}
\ead{lourenco.beirao@unimib.it}

\author[imati]{F. Brezzi\corref{correspondingauthor}}
\ead{brezzi@imati.cnr.it}

\author[unimib]{F. Dassi}
\ead{franco.dassi@unimib.it}

\author[unipv,imati]{L.D. Marini}
\ead{marini@imati.cnr.it}

\author[unimib,imati]{A. Russo}
\ead{alessandro.russo@unimib.it}

\address[unimib]{Dipartimento di Matematica e Applicazioni, Universit\`a di 
Milano--Bicocca, Via Cozzi 53, I-20153, Milano, Italy}
\address[imati]{IMATI CNR, Via Ferrata 1, I-27100 Pavia, Italy}
\address[unipv]{Dipartimento di Matematica, Universit\`a di Pavia,
Via Ferrata 5, I-27100 Pavia, Italy}
\cortext[correspondingauthor]{Corresponding author}

\begin{abstract}
We consider, as a simple model problem, the application of Virtual Element Methods (VEM) to the linear Magnetostatic three-dimensional problem in the formulation of F. Kikuchi. In doing so, we also introduce new serendipity VEM spaces, where the serendipity reduction is made only on the faces of a general polyhedral decomposition (assuming that internal degrees of freedom could be more easily eliminated by static condensation). These new spaces are meant, more generally, for the combined approximation of $H^1$-conforming ($0$-forms), $H({\rm {\bf curl}})$-conforming ($1$-forms),  and $H({\rm div})$-conforming ($2$-forms) functional spaces in three dimensions, and they would surely be useful for other problems and in more general contexts.
\end{abstract}


\begin{keyword}
 Virtual Element Methods, $~$ Serendipity, $~$ Magnetostatic problems,

 AMS Subject Classification: 65N30
\end{keyword}

\end{frontmatter}

\section{Introduction}
The aim of this paper is two-fold. We present a variant of the serendipity nodal, edge, and face Virtual Elements presented in \cite{SERE-mix} that could be used in many different applications (in particular since they can be set  in an exact sequence), and we show their use on a model linear Magnetostatic problem in three dimensions, following the formulation of F. Kikuchi \cite{KikuchiJJAM}, \cite{KikuchiIEEE}. Even though such formulation is not widely used within the Electromagnetic computational community, we believe that is it a very nice example of use of the De Rham diagram (see e.g. \cite{Demko-hp}) that here is available for serendipity spaces of general order.

Virtual Elements were  introduced  a few years ago \cite{volley, hitchhikers, super-misti}, and can be seen as part of  the wider family of Galerkin approximations based on polytopal decompositions, including Mimetic Finite Difference methods (the {\it ancestors} of VEM: see e.g. \cite{MFD22,  MFD23} and the references therein),  Discontinuous Galerkin  (see e.g. \cite{ABCM,  Cockburn-ZAMM}, or recently  \cite{Dole-Feista}, and the references therein), Hybridizable Discontinuous Galerkin and their variants (see \cite{Cockburn-Jay-Lazarov},
or much more recently \cite{Cockburn-DiPietro-Ern, skeletal-es}, and the references therein). On the other hand their use of
non-polynomial basis functions connect them as well with other methods such as polygonal interpolant basis functions, barycentric coordinates, mean value coordinates, metric coordinate method, natural neighbor-based coordinates, generalized FEMs, and maximum entropy shape functions. See for instance
\cite{Wachspress11},
\cite{FloaterActa},
\cite{Sukumar:Malsch:2006},
\cite{TPPM10} and the references therein.
Finally, many aspects are closely connected with
Finite Volumes and related methods (see e.g. ~\cite{Droniou-gradient},
\cite{Droniou:Eymard:Gallouet:Herbin:2010},
and the references therein).

The list of VEM contributions in the literature is nowadays quite large; in addition to the ones above, we here limit ourselves to mentioning \cite{Stokes:divfree, HR-Virtual-Plate, VEM-elasticity, Berrone-VEM, brezzi:Marini:plates, Paulino-VEM, Gatica-1, Helmo-PPR, Wriggers-1}.

Here we deal, as a simple model problem, with the classical magnetostatic problem in a { smooth-enough} bounded  domain
$\Om$ in $\R^3$, simply connected: given $\jj \in H(\div;\Om)$ with $\div \jj =0$  in $\Om$, and given  $\mu=\mu(\xx) \mbox{ with }0<\mu_0\le \mu \le \mu_1$,
\begin{equation}
\left\{
\begin{aligned}\label{Max3}
& \mbox {find  }\HH\in H(\bcurl;\Om) \mbox{ and }\BB\in H(\div;\Om) \mbox{ such that: }\\
& \bcurl \HH=\jj  \mbox{ and }\div\BB=0, \mbox{ with }\BB=\mu\HH \mbox{ in }\Om,\\
&\mbox{with the boundary conditions } \HH\wedge\nn =0\mbox{ on }\partial\Om .
\end{aligned}
\right.
\end{equation}

When discretizing a three-dimensional problem, the degrees of freedom internal to elements (tetrahedra, hexahedra, polyhedra, etc.) can,  in most cases, be easily eliminated by {\it static condendation}, and their burden on the resolution of the final linear system is not overwhelming. This is not the case for edges and faces, where static condensation would definitely be much more problematic. On edges one cannot save too much:  in general the trial and test functions, there, are just one-dimensional polynomials. On faces, however, 
for $0$-forms and $1$-forms,  higher order approximations on polygons with many edges find a substantial benefit by the use of the serendipity approach, that allows an important saving of degrees of freedom internal to faces.

For that we constructed serendipity virtual elements in \cite{SERE-nod} and \cite{SERE-mix} (for scalar or vector valued
local spaces, respectively) that however were not fully adapted to the construction of De Rahm complexes. The spaces were therefore modified, for the 2d case, in \cite{max2}. Here we use this latest version on the {\it boundary} of the polyhedra of our three-dimensional decompositions, and we show that this can be a quite viable choice.

We point out that, contrary to what happens for FEMs (where, typically, the serendipity subspaces do not depend on the degrees of freedom used in the bigger, non-serendipity, spaces), for Virtual Elements the construction of the serendipity spaces depends, in general, heavily on the degrees of freedom used,  so that if we want an exact sequence the {\it degrees of freedom} in the VEM spaces must be chosen properly.

We will show that the present serendipity VEM spaces are perfectly suited for the approximation of problem \eqref{Max3} with the Kikuchi approach, and we believe that they might be quite interesting in many other problems in Electromagnetism as well as in other important applications of Scientific Computing.  In particular we have {\it a whole family} of spaces of different order of accuracy $k$. For simplicity we assumed here  that the same order $k$ is used in all the elements of the decomposition, but we point out that the great versatility of VEM would very easily comply with the use of different orders in different elements, allowing very effective $h/p$ strategies.

A single (lowest order only, and particularly cheap) Virtual Element Method for electro-magnetic problems was already  proposed in \cite{lowest-max3}, but the family proposed here does not include it: roughly speaking, the element in \cite{lowest-max3} is based on a generalization to polyhedra of the {\it lowest order N\'ed\'elec first type} element (say, of degree between $0$ and $1$), while, instead, the family presented here could be seen as being based on generalizations to polyhedra of the {\it  N\'ed\'elec second type} elements (of order $k\ge 1$).

A layout of the paper is as follows: in Section \ref{propol} we introduce some basic notation, and recall some well known properties of polynomial spaces. In Section \ref{discre1} we will first recall the Kikuchi variational formulation of \eqref{Max3}. Then, in Subsection \ref{local} we present the {\it local} two-dimensional Virtual Element spaces of {\it nodal} and {\it  edge}  type to be used on the interelement boundaries. As we mentioned already, {\it the spaces} are the same already discussed in \cite{volley}, \cite{projectors} and in \cite{BFM}, \cite{super-misti}, respectively,
but with a different choice of the  {\it degrees of freedom}, suitable for the serendipity construction discussed in Subsection \ref{sere}. In Subsection \ref{local3} we  present the {\it local}  three-dimensional spaces. In Subsection \ref{global}  we construct the {\it global}  version of all these spaces, and discuss their properties and the properties of the relative exact sequence. In Section \ref{disc-pro} we first introduce the discretized problem, and in Subsection \ref{theo:est} we prove  the a priori  error bounds for it. In Section \ref{sec:NE} we present some numerical results that show that the quality of the approximation is very good, and also that the serendipity variant does not jeopardize the accuracy. 

\section{Notation and well known properties of polynomial spaces}\label{propol}

In two dimensions, we will denote by $\xx$  the indipendent variable, using $\xx=(x,y)$
or (more often) $\xx=(x_1,x_2)$ following the circumstances. We will also use
$\xx^{\perp}:=(-x_2,x_1)$,
and in general, for a vector $\vv\equiv(v_1,v_2)$,
\begin{equation}\label{vperp}
\vv^{\perp}:=(-v_2,v_1).
\end{equation}

\noindent
Moreover, for a vector $\vv$ and a scalar $q$ we will write
\begin{equation}\label{rotebrot}
\displaystyle{\rot \vv:=\frac{\partial v_2}{\partial x}-\frac{\partial v_1}{\partial y}},
\qquad \brot \,q:=(\frac{\partial q}{\partial y}, -\frac{\partial q}{\partial x})^T.
\end{equation}
We recall some commonly used functional spaces. On a  domain $\O$ we have
 \begin{align*}
&H(\div;\O)=\{\vv\in [L^2(\O)]^3 \mbox{ with } \div \vv \in L^2(\O)\},\\
&H_0(\div;\O)=\{\bphi\in H(\div;\O)\mbox { s.t. } \bphi\cdot\nn=0 \mbox{ on }\partial\O\},\\
&H(\curl;\O)=\{\vv\in [L^2(\O)]^3 \mbox{ with } \curl \vv \in [L^2(\O)]^3\},\\
&H_0(\curl;\O)=\{\vv \in H(\curl;\O) \mbox{ with } {\vv}\wedge\nn =0 \mbox{ on } \partial \O \},\\
&H^1(\O)=\{q\in L^2(\O) \mbox{ with } \bgrad\, q \in [L^2(\O)]^3\},\\
&H^1_0(\O)=\{q\in H^1(\O) \mbox{ with } q=0 \mbox{ on } \partial\O\}.
\end{align*}
\noindent For an integer $s\ge -1$ we will denote by $\P_s$ the space of polynomials of degree $\le s$.
Following a common convention, $\P_{-1}\equiv\{0\}$ and $\P_0\equiv\R$. Moreover, for
$s\ge 1$
 \begin{equation}\label{defPh}
\P^h_s:=\{ \mbox{homogeneous pol.s in $\P_s$}\},\quad\P^0_s(\O):=\{q\in\P_s \mbox{ s. t. }\!\int_{\O}q\,{\rm d}\O=0\}.
\end{equation}

The following decompositions of polynomial vector spaces are well known and will be useful in what follows. In two dimensions we have
\begin{equation}\label{decoPs}
(\P_s)^2=\brot (\P_{s+1})\oplus \xx \P_{s-1} \quad\mbox{ and }\quad(\P_s)^2=\bgrad (\P_{s+1})\oplus \xx^{\perp} \P_{s-1},
\end{equation}
and in three dimension
\begin{equation}\label{decompPs3D}
(\P_s)^3=\bcurl((\P_{s+1})^3) \oplus\xx\P_{s-1},\quad\mbox{ and }\quad
(\P_s)^3=\bgrad(\P_{s+1}) \oplus\xx\wedge(\P_{s-1})^3.
\end{equation}
Taking the  $\bcurl$ of the second of \eqref{decompPs3D} we also get :
\begin{equation}\label{decompPs3Dbis}
\bcurl(\P_s)^3= \bcurl (\xx\wedge(\P_{s-1})^3)
\end{equation}
which used in the first of \eqref{decompPs3D} gives:
\begin{equation}\label{decompPs3Dter}
(\P_s)^3= \bcurl (\xx\wedge(\P_{s})^3)\oplus\xx\P_{s-1}.
\end{equation}
We also recall the definition of the N\'ed\'elec {\it local} spaces of 1-st and 2-nd kind.
\begin{equation}\label{N1-N2-2D}
\begin{aligned}
&\mbox{In $2$d:}\quad N1_s=\bgrad\, \P_{s+1}\oplus \xx^{\perp}(\P_{s})^2, ~s\ge 0,\qquad
 N2_s:=(\P_s)^2, ~s\ge 1,\\
%
&\mbox{in $3$d:}\quad N1_s=\bgrad\, \P_{s+1}\oplus \xx\wedge(\P_{s})^3, ~s\ge 0,\qquad
 N2_s:=(\P_s)^3, ~s\ge 1. 
\end{aligned}
\end{equation}

In what follows, when dealing with the {\it faces} of a polyhedron (or of a polyhedral decomposition) we shall use two-dimensional differential operators that act on the restrictions to faces of scalar functions that are defined on a three-dimensional domain. Similarly, for vector valued functions we will use two-dimensional differential operators that act on the restrictions to faces of the tangential components.  In many cases, no confusion will be likely to occur; however, to stay on the safe side, we will often use a superscript $\tau$  to denote the tangential components of a three-dimensional vector, and a subscript $f$ to indicate the two-dimensional differential operator. Hence, to fix ideas, if a face has equation
$x_3=0$ then $\xx^{\tau}:=(x_1,x_2)$ and, say, $\div_f\vv^{\tau}:=\frac{\partial v_1}{\partial x_1}+
\frac{\partial v_2}{\partial x_2}$.

\section{The problem and the spaces}\label{discre1}

\subsection{The Kikuchi variational formulation}\label{kikvar}
Here we { shall} deal with the variational formulation introduced  in
\cite{KikuchiIEEE}, that reads
\begin{equation}\label{K1_3}
\left\{
\begin{aligned}
& \mbox {find  }\HH\in H_0(\bcurl;\Om) \mbox{ and }p\in H^1_0(\Om) \mbox{ such that: }\\
&\mbox{$ \int_{\Om}\bcurl\HH\!\cdot\!\bcurl\vv\dO+\!\int_{\Om}\nabla p\!\cdot\!\mu\vv\dO=\!\int_{\Om}\jj\!\cdot\!\bcurl\vv\dO
\quad\forall\vv\in H_0(\bcurl;\Om)$}\\
&\mbox{$ \int_{\Om}\nabla q\!\cdot\!\mu\HH\dO=0\quad\forall q\in H^1_0(\Om).$}\\
\end{aligned}
\right.
\end{equation}
It is easy to check
that \eqref{K1_3} has a unique solution $(\HH,p)$. Then we check that
$\HH$ and $\mu\HH$  give the solution of  \eqref{Max3}  and $p=0$.  Checking  that $p=0$ is immediate, just taking $\vv=\nabla p$ in the first equation. Once we know that $p=0$ the first equation gives $\bcurl\HH=\jj$, and then the second equation gives $\div\mu\HH=0$.

We will now design the Virtual Element approximation of \eqref{K1_3}  of order $k\ge 1$. We define first the local spaces. {Let  $\PP$ be a polyhedron,
simply connected, with all its faces also simply connected and convex. (For the treatment of non-convex faces we refer to \cite{SERE-mix}). More detailed assumptions will be given in Section \ref{theo:est}.}

\newcommand{\kd}{k}
\newcommand{\kr}{k-1}

\subsection{The  local spaces on faces}\label{local}

We first recall the local {\it nodal} and {\it edge} spaces on faces introduced in \cite{max2}. We shall deal with a sort of generalisation to polygons of {\it N\'ed\'elec elements of the second kind} $N2$ (see \eqref{N1-N2-2D}). For this, let $k \ge 1$. For each face $f$ of $\PP$, the {\it edge} space on $f$ is defined as
\begin{equation} \label{k-edgef}
V^{\edge}_{k}(f)\! :=\! \Big\{\! \vv\!\in\! [L^2(f)]^2\!:
\div\vv \!\in\! \P_{\kd}(f), \, \rot\vv\! \in\! \P_{\kr}(f),\,
\vv \cdot \tt_e\! \in\! \P_{k}(e) \, \forall e\! \in\! \partial f \Big\} ,
\end{equation}
with the degrees of freedom
\begin{align}
&\bullet
\mbox{ on each $e\in \partial f$, the moments $\int_e (\vv\cdot\tt_e) p_{k} \ds \quad \forall p_{k} \in \P_{k}(e) $,}
\label{k-2dofe1}\\
&\bullet
\mbox{  the moments $ \int_{f}\vv\cdot\xxf \: p_{\kd}\df \quad \forall p_{\kd} \in\P_{\kd}(f)$,} \label{k-2dofe2}\\
&\bullet\mbox{$\int_{f} \rot\vv ~ p_{\kr}^0 \df \quad \forall p_{\kr}^0 \in \P_{\kr}^0(f)$}\qquad\mbox{ (only for $k > 1$)}) , \label{k-2dofe3}
\end{align}
where $\xxf=\xx-{\bf b}_{f}$, with ${\bf b}_{f}=$  barycenter of $f$, and $\P_s^0$ was defined in \eqref{defPh}.

We recall that for $\vv\in V^{\edge}_{k}(f)$  the value of $\rot\vv$ is easily computable from the degrees of freedom {\eqref{k-2dofe1}} and
\eqref{k-2dofe3}. Indeed, the mean value of $\rot\vv$ on $f$ is computable from \eqref{k-2dofe1} and Stokes Theorem, and then
 (since $\rot\vv\in\P_{\kr}$) the use of \eqref{k-2dofe3} gives the full value of
 $\rot\vv$. Once we know $\rot\vv$,  following \cite{max2}, we can easily compute, always for each $\vv\in V^{\edge}_{k}(f)$, the $L^2$--projection $\Pi^0_{k+1}:V^{\edge}_{k}(f) \rightarrow [\P_{k+1}(f)]^2$. Indeed: by definition of projection, using \eqref{decoPs} and integrating by parts we obtain:
\begin{equation}\label{proj-edge-facce}
\begin{aligned}
\mbox{$\int_f \Pi^0_{k+1}\vv\cdot{\bf p}_{k+1}$}&\df:=\mbox{$\int_f \vv\cdot{\bf p}_{k+1} \df  = \int_f \vv\cdot (\text{\bf{rot}}\, q_{k+2} + \xxf q_{k} ) \df $} \\
& = \mbox{$  \int_f (\rot \vv) q_{k+2}  \df +  \sum_{e \in \partial f} \int_e (\vv\cdot\tt) q_{k+2} \ds
+ \int_f \vv\cdot\xxf q_{k}  \df$}
\end{aligned}
\end{equation}
and it is immediate to check that each of the last three terms is computable.

\begin{remark}\label{loc-scalp}
 Among other  things, projection operators can be used to define suitable scalar products in $V^{\edge}_k(f)$. As common in the virtual element literature, we could use the (Hilbert) norm
\begin{equation}\label{coidof-e}
\|\vv\|_{\epsf}^2:=\|\Pi^0_k\vv\|_{0,f}^2+\sum_i (dof_i\{(I-\Pi^0_k)\vv\})^2,
\end{equation}
where the $dof_i$ are the degrees of freedom in $V^{\edge}_k(f)$, properly scaled. In \eqref{coidof-e} we could also insert any symmetric and positive definite matrix $S$ and change the second term into ${\bf d}^T\,S\,{\bf d}$
(with ${\bf d}=$ the vector of the $dof_i\{(I-\Pi^0_k)\vv\}$). 
 Alternatively we could use
\begin{equation}\label{esc-bd}
\|\vv\|_{\epsf}^2:=\|\Pi^0_{k+1}\vv\|_{0,f}^2+h_f\|(I-\Pi^0_{k+1})\vv\cdot \tt\|_{0,\partial f}^2
\end{equation}
(that is clearly a Hilbert norm) where $h_f$ is the diameter of the face $f$. It is easy to check that the associated inner product {\it scales} like the natural $[L^2(f)]^2$ inner product (meaning that  $\|\vv\|_{V^{\edge}_k(f)}$ is bounded above and below by $\|\vv\|_{0,f}$ times suitable constants independent of $h_f$), and moreover coincides with the $[L^2(f)]^2$ inner product whenever one of the two entries is in $(\P_{k+1})^2$.    \qed
\end{remark}
For each face $f$ of $\PP$,  the {\it nodal} space of order $k+1$ is defined as
\begin{equation}\label{k-nodf}
 V_{k+1}^{\node}(f)    := \Big\{ q \in H^1(f) : \: q_{|e} \in \P_{k+1}(e) \: \forall e \in \partial f, \: \Delta q \in \P_{\kd}(f) \Big\},
 \end{equation}
with the degrees of freedom
\begin{align}
& \bullet \mbox{ for each vertex $\nu$ the value $q(\nu)$, } \label{k-2dofn0}\\
& \bullet\mbox{ for each edge $e$ the moments $\int_e q\, p_{k-1}\ds\quad\forall p_{k-1}\in\P_{k-1}(e)$},\label{k-2dofn1}\\
& \bullet\,
\mbox{$\int_{f}(\nabla q \cdot \xxf)\: p_{\kd} \df \quad \forall p_{\kd} \in\P_{\kd}(f).$}\label{k-2dofn2}
\end{align}

\subsection{The local serendipity spaces on faces}\label{sere}

We recall the serendipity spaces introduced in \cite{max2}, which will be used to construct the serendipity spaces on polyhedra. Let $f$ be a face of $\PP$, assumed to be a convex polygon. Following \cite{SERE-nod} we introduce 
\begin{equation}\label{defbeta}
\beta :=k+1-\eta .
\end{equation}
where $\eta$ is {\it the number of straight lines necessary to cover the boundary of $f$}.
We note that the convexity of $f$  does not imply that $\eta$ is equal to the number of edges of $f$, since we might have different consecutive edges that belong to the same straight line.
Next, we
define a projection $\Pi_{S}^{\edge}: V_{k}^{\edge} (f) \rightarrow \P_{k}(f)$ as follows:
\begin{align}
&\mbox{$\int_{\partial f}[(\vv - \Pi_{S}^{\edge} \vv)\cdot\tt] [\nabla p\cdot\tt]  \ds= 0 \quad \forall { p} \in \P_{k+1}(f)$},\label{defPieS1}\\
&\mbox{$\int_{\partial f}(\vv - \Pi_{S}^{\edge} \vv)\cdot \tt \ds=0$},\label{defPieS2}\\
&\mbox{$\int_f \rot(\vv-\Pi^{\edge}_S\vv)p^0_{\kr} \df=0\quad \forall p_{\kr}^0 \in \P_{\kr}(f)\quad \mbox{for } k> 1$} ,\label{defPieS3}\\
&\mbox{$\int_{f}(\vv-\Pi^{\edge}_S\vv)\cdot\xxf \: p_{\beta}\df \quad \forall p_{\beta} \in\P_{\beta}(f)\quad \mbox{{\bf only} for } \beta \ge 0.$}\label{defPieS4}
\end{align}
The {\bf serendipity edge space} is then defined as:
\begin{equation}\label{defVeS}
SV_{k}^{\edge}(f) : = \Big\{ \vv \in V_{k}^{\edge}(f)  \: : \: \int_{f} (\vv-\Pi_{S}^{\edge} \vv )  \cdot \xxf\,p \df=0 \quad\forall p\in \P_{\beta|\kd}(f)\Big\} ,
\end{equation}
where $\P_{\beta|\kd}$ is the space spanned by { all} the homogeneous polynomials of degree $s$ with $\beta< s\le \kd$.
The degrees of freedom in $SV_{k}^{\edge}(f)$ will be \eqref{k-2dofe1} and \eqref{k-2dofe3}, plus
\begin{equation}\label{k-2dofe3-sere}
\int_{f}\vv\cdot\xxf \: p_{\beta}\df \quad \forall p_{\beta} \in\P_{\beta}(f) \qquad \mbox{ \bf only if } \beta\ge 0.
\end{equation}
 To summarize: {if} $\beta<0$, i.e., if $k+1<\eta$, the only internal degrees of freedom are \eqref{k-2dofe3}, and the moments \eqref{k-2dofe2} are given by those of $\Pi_{S}^{\edge}$. Instead, for $\beta \ge 0$ we have to include among the d.o.f. the moments of order up to $\beta$ given in \eqref{k-2dofe3-sere}. The remaining moments, of order up to $k$, are again  given by those of $\Pi_{S}^{\edge}$. We point out that, on triangles, these are now exactly the N\'ed\'elec elements of second kind.

{Clearly in $SV^{\edge}_k(f)$ (that is included in $V^{\edge}_k(f)$) we can still use the scalar product defined in \eqref{esc-bd} or \eqref{coidof-e}}.

For the construction of the {\it nodal} serendipity space we proceed as before. Let $\Pi^{\node}_{S}: V_{k+1}^{\node}(f) \rightarrow \P_{k+1}(f)$ be a projection  defined by
\begin{equation}\label{PiS-nod2}
\left\{
\begin{aligned}
&\mbox{$\int_{\partial f} \partial_t (q-\Pi^{\node}_{S}q)\partial_t p \ds=0\quad \forall p\in \P_{k+1}(f)$},\\
&\mbox{$ \int_{\partial f} (\xxf\cdot\nn) (q - \Pi_S^{\node} q) \ds = 0$} ,\\
&
\mbox{$\int_{f}(\nabla(q-\Pi^{\node}_S q)\cdot\xxf\, p_{\beta}\df=0\quad\forall p_{\beta}\in\P_{\beta}$}\quad \mbox{{\bf only} for } \beta \ge 0.
\end{aligned}
\right.
\end{equation}
The {\bf serendipity nodal space} is then defined as:
\begin{equation}\label{defVnS}
SV_{k+1}^{\node}(f) := \Big\{ q \in V_{k+1}^{\node}(f)  \, : \: \int_{f} (\nabla q - \nabla\Pi_{S}^{\node} q )\cdot \xxf\,p\df=0 \,\forall p\in \P_{\beta| \kd}(f) \Big\} .
\end{equation}
The degrees of freedom in $SV_{k+1}^{\node}(f)$ will be \eqref{k-2dofn0} and \eqref{k-2dofn1}, plus
\begin{equation}\label{k-2dofn2-sere}
\int_{f}(\nabla q \cdot \xxf)\: p_{\beta} \df \quad \forall p_{\beta} \in\P_{\beta}(f) \qquad \mbox{ \bf only if } \beta\ge 0.
\end{equation}
From this construction it follows that the nodal serendipity space contains internal d.o.f. only if $k+1\ge \eta$, and the number of these d.o.f. is equal to the dimension of $\P_{\beta}$ only. The remaining d.o.f. are copied from those of $\Pi_{S}^{\node}$. Note also that on triangles we have back the old polynomial Finite Elements of degree $k+1$. Before dealing with the three dimensional spaces, we recall a useful result proven in \cite{max2}, Proposition 5.4.
\begin{prop}\label{2d-diagram-Vn-Ve}
It holds
\begin{equation}\label{2d-Vn-Ve}
\nabla SV^{\node}_{k+1}(f)=\{ \vv \in SV^{\edge}_{k}(f):~ \rot \vv ={0}\}.
\end{equation}
\end{prop}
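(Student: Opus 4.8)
The plan is to prove the two inclusions of \eqref{2d-Vn-Ve} separately, using the characterization of the two serendipity spaces through their defining projections $\Pi^{\node}_S$ and $\Pi^{\edge}_S$. First I would establish the easy inclusion ``$\subseteq$''. Let $q\in SV^{\node}_{k+1}(f)$ and set $\vv:=\nabla q$. Since $q\in V^{\node}_{k+1}(f)$, on each edge $q_{|e}\in\P_{k+1}(e)$ so $\vv\cdot\tt_e=\partial_t q\in\P_k(e)$, while $\div\vv=\Delta q\in\P_k(f)$ and $\rot\vv=\rot(\nabla q)=0\in\P_{k-1}(f)$; hence $\vv\in V^{\edge}_k(f)$ with $\rot\vv=0$. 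It then remains to check that $\vv$ satisfies the serendipity constraint \eqref{defVeS}, i.e. $\int_f(\vv-\Pi^{\edge}_S\vv)\cdot\xxf\,p\df=0$ for all $p\in\P_{\beta|\kd}(f)$. Here the key point is the compatibility of the two projections: I would show that $\Pi^{\edge}_S(\nabla q)=\nabla\Pi^{\node}_S q$. This follows by comparing the defining conditions \eqref{defPieS1}--\eqref{defPieS4} with those in \eqref{PiS-nod2}: condition \eqref{defPieS1} for $\vv=\nabla q$ reads $\int_{\partial f}\partial_t(q-r)\partial_t p\ds=0$ where $r$ is a potential for $\Pi^{\edge}_S\vv$ on $\partial f$ (recall $\rot\Pi^{\edge}_S\vv=0$ on $f$ when $\rot\vv=0$, by \eqref{defPieS3}, so $\Pi^{\edge}_S\vv$ is a gradient), which matches the first line of \eqref{PiS-nod2}; \eqref{defPieS2} fixes the additive constant consistently with the second line of \eqref{PiS-nod2} (via an integration by parts turning $\int_{\partial f}(\xxf\cdot\nn)(q-\Pi^{\node}_S q)\ds$ into a statement about $\int_{\partial f}(\nabla(q-\Pi^{\node}_S q)\cdot\tt)\cdots$ — this is the bookkeeping step that needs care); and \eqref{defPieS4} matches the third line of \eqref{PiS-nod2}. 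Once $\Pi^{\edge}_S(\nabla q)=\nabla\Pi^{\node}_S q$ is in hand, the constraint \eqref{defVeS} for $\vv$ becomes exactly the constraint \eqref{defVnS} defining $SV^{\node}_{k+1}(f)$, so $\vv\in SV^{\edge}_k(f)$.

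Next I would prove the reverse inclusion ``$\supseteq$''. Let $\vv\in SV^{\edge}_k(f)$ with $\rot\vv=0$. By Proposition-type results already available in the non-serendipity setting (the 2d de Rham sequence for $V^{\node}_{k+1}(f)$ and $V^{\edge}_k(f)$, which I may invoke as it underlies \cite{max2}), the condition $\vv\in V^{\edge}_k(f)$, $\rot\vv=0$ gives a scalar potential $q$ with $\vv=\nabla q$ and $q\in V^{\node}_{k+1}(f)$ (simple connectedness of $f$ is assumed); $q$ is unique up to a constant, which I fix by normalizing, say, $\int_{\partial f}(\xxf\cdot\nn)q\ds$ to match $\int_{\partial f}(\xxf\cdot\nn)\Pi^{\node}_S q\ds$ — or more simply by working throughout modulo constants, which is harmless since $\nabla$ kills them. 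It then suffices to show $q\in SV^{\node}_{k+1}(f)$, i.e. that $q$ satisfies \eqref{defVnS}. Again this reduces to the identity $\nabla\Pi^{\node}_S q=\Pi^{\edge}_S(\nabla q)=\Pi^{\edge}_S\vv$, established exactly as above, after which \eqref{defVnS} for $q$ is literally \eqref{defVeS} for $\vv$, which holds by hypothesis.

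So the whole argument hinges on the single lemma $\Pi^{\edge}_S(\nabla q)=\nabla(\Pi^{\node}_S q)$ for $q\in V^{\node}_{k+1}(f)$, together with the well-definedness (injectivity of the defining conditions) of both serendipity projections. The main obstacle I anticipate is precisely the verification of that commuting relation: one must show that $\nabla\Pi^{\node}_S q$ — a priori just the gradient of an element of $\P_{k+1}(f)$, hence an element of $(\P_k(f))^2$ with zero $\rot$ — actually satisfies all the conditions \eqref{defPieS1}--\eqref{defPieS4} that uniquely characterize $\Pi^{\edge}_S(\nabla q)$, and this requires translating the boundary conditions carefully: the tangential-derivative condition \eqref{defPieS1} is immediate, but \eqref{defPieS2} versus the second line of \eqref{PiS-nod2} involves an integration by parts on the closed polygonal curve $\partial f$ (writing $\int_{\partial f}\xxf\cdot\tt\,\df$-type terms in terms of $\xxf\cdot\nn$ and vice versa, using $\div_f\xxf=2$), and one must make sure no spurious boundary contributions at the vertices of $f$ arise. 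A secondary, more routine point is checking that the counts of conditions match the dimensions so that $\Pi^{\edge}_S$ and $\Pi^{\node}_S$ are genuinely well defined — but this is already done in \cite{max2} and can be cited. Finally, one should note that simple connectedness and convexity of $f$ (assumed in the statement of the local spaces) are exactly what is needed to produce the potential $q$ and to have $\eta$, hence $\beta$, well defined, so these hypotheses enter only through the setup.
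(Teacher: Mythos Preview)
The paper does not actually prove this proposition: it is merely \emph{recalled} from \cite{max2}, Proposition~5.4, with no argument given here. So there is no ``paper's own proof'' to compare your proposal against.

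That said, your strategy is sound and is exactly the natural one: reduce both inclusions to the commuting identity $\Pi^{\edge}_S(\nabla q)=\nabla(\Pi^{\node}_S q)$, after which the serendipity constraints \eqref{defVeS} and \eqref{defVnS} become literally the same condition. Your worry about matching \eqref{defPieS2} with the second line of \eqref{PiS-nod2} via an integration by parts is, however, misplaced and can be dropped. Once you know (from \eqref{defPieS2} and \eqref{defPieS3}, via Stokes) that $\rot\vv=0$ forces $\rot(\Pi^{\edge}_S\vv)=0$, so that $\Pi^{\edge}_S\vv=\nabla r$ for some $r\in\P_{k+1}(f)$, condition \eqref{defPieS2} becomes $\int_{\partial f}\partial_t(q-r)\ds=0$, which is automatic on the closed curve $\partial f$; it carries no information about $r$. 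Conversely, the second line of \eqref{PiS-nod2} serves only to fix the additive constant of $\Pi^{\node}_S q$, which the gradient does not see. Thus the commuting identity follows directly from matching \eqref{defPieS1} with the first line of \eqref{PiS-nod2} and \eqref{defPieS4} with the third line, while \eqref{defPieS2} and \eqref{defPieS3} are trivially satisfied by any gradient of a $\P_{k+1}$ polynomial. No $\xxf\cdot\nn$ versus $\xxf\cdot\tt$ conversion is needed.
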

\newcommand{\kdP}{k-1}
The following result is immediate, but we point it out for future use.

\begin{prop}\label{bubbleface}
For every $q\in V_{k+1}^{\node}(f)$ there exists a (unique) $q^*$  such that
\begin{equation}\label{sigmaq}
q^*\in SV_{k+1}^{\node}(f)\quad\mbox{(and we denote it as $q^*=\sigma^{{\node},f}(q)$)},
\end{equation}
that has the same degrees of freedom \eqref{k-2dofn0},\eqref{k-2dofn1}, and \eqref{k-2dofn2-sere} of $q$. The difference $q-q^*$ is obviously a bubble in $V_{k+1}^{\node}(f)$.
Similarly,
for a $\vv$ in $V_{k}^{\edge}(f)$ there exists a unique $\vv^*$ with 
\begin{equation}\label{sigmav}
\vv^*\in SV_{k}^{\edge}(f)\quad\mbox{(and we denote it as $\vv^*=\sigma^{{\edge},f}(\vv)$)},
\end{equation}
with the same degrees of freedom \eqref{k-2dofe1}-\eqref{k-2dofe3}, and \eqref{k-2dofe3-sere} of $\vv$.  The difference $\vv-\vv^*$ is an $H(rot)$-bubble and, in particular, is the gradient of a scalar  bubble
$\xi(\vv)$:
\begin{equation}\label{xiv}
\nabla \xi\equiv \vv-\vv^*
\end{equation}
\end{prop}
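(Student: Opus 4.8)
The plan is to treat the two assertions separately, and to build each desired element by prescribing its degrees of freedom. Recall that $SV^{\node}_{k+1}(f)\subset V^{\node}_{k+1}(f)$ and that a function in $V^{\node}_{k+1}(f)$ is uniquely determined by the full set of d.o.f.\ \eqref{k-2dofn0}--\eqref{k-2dofn2}, while a function in $SV^{\node}_{k+1}(f)$ is uniquely determined by the reduced set \eqref{k-2dofn0},\eqref{k-2dofn1},\eqref{k-2dofn2-sere}; similarly for the edge spaces. So, given $q\in V^{\node}_{k+1}(f)$, I would define $q^*:=\sigma^{{\node},f}(q)$ as the unique element of $SV^{\node}_{k+1}(f)$ whose d.o.f.\ \eqref{k-2dofn0},\eqref{k-2dofn1},\eqref{k-2dofn2-sere} agree with those of $q$; existence and uniqueness are exactly the unisolvence of these d.o.f.\ on $SV^{\node}_{k+1}(f)$, which was established (in \cite{max2}) when the serendipity space and its d.o.f.\ were introduced. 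Then $q-q^*\in V^{\node}_{k+1}(f)$ has vanishing vertex values \eqref{k-2dofn0} and vanishing edge moments \eqref{k-2dofn1}; since elements of $V^{\node}_{k+1}(f)$ restrict to $\P_{k+1}(e)$ on each edge, these two families of d.o.f.\ fix the trace on $\partial f$, so $q-q^*$ vanishes on $\partial f$, i.e.\ it is a nodal bubble, as claimed. The edge case is identical: set $\vv^*:=\sigma^{{\edge},f}(\vv)$ as the unique element of $SV^{\edge}_{k}(f)$ matching the d.o.f.\ \eqref{k-2dofe1},\eqref{k-2dofe3},\eqref{k-2dofe3-sere} of $\vv$, using the corresponding unisolvence result.

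For the last sentence I must show that the $H(\rot)$-bubble $\vv-\vv^*$ is in fact a gradient. The key observation is that $\rot(\vv-\vv^*)$ is determined by the d.o.f.\ \eqref{k-2dofe1} and \eqref{k-2dofe3} only (as recalled just after \eqref{k-2dofe3}: the mean value of $\rot$ comes from the tangential edge d.o.f.\ via Stokes, the rest from \eqref{k-2dofe3}), and $\vv$ and $\vv^*$ share all of these; hence $\rot(\vv-\vv^*)=0$ on $f$. Since $f$ is simply connected, a curl-free (in the two-dimensional sense $\rot$) vector field in $[L^2(f)]^2$ with $\rot=0$ is a gradient, so there is a scalar $\xi=\xi(\vv)$, unique up to an additive constant, with $\nabla\xi\equiv\vv-\vv^*$. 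One then checks $\xi$ is a genuine \emph{bubble} in $V^{\node}_{k+1}(f)$: first, $\xi\in V^{\node}_{k+1}(f)$, because $\nabla\xi=\vv-\vv^*\in V^{\edge}_k(f)$ has tangential traces in $\P_k(e)$ (so $\partial_t\xi|_e\in\P_k(e)$, whence $\xi|_e\in\P_{k+1}(e)$) and $\Delta\xi=\div(\vv-\vv^*)\in\P_k(f)$; fixing the additive constant by demanding $\xi$ vanish at one vertex, the fact that $\vv-\vv^*$ has zero tangential edge moments \eqref{k-2dofe1} forces $\xi$ to vanish at all vertices and along $\partial f$, i.e.\ $\xi$ is a bubble. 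Finally, Proposition~\ref{2d-diagram-Vn-Ve} gives $\nabla\xi\in SV^{\edge}_k(f)$ precisely when $\rot\nabla\xi=0$, which it is; and $\xi$ itself may be taken to be $\sigma^{{\node},f}(\xi)$ plus a bubble, but in fact $\xi-\sigma^{{\node},f}(\xi)$ is irrelevant since we only use $\nabla\xi$, which is unaffected by adding a nodal bubble whose gradient lies in the edge-bubble kernel; alternatively one simply notes $\xi$ is already a bubble so it equals the nodal bubble part directly.

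I expect the only genuine subtlety to be the verification that $\rot(\vv-\vv^*)=0$ — i.e.\ making crisp the claim, already used informally in the paragraph after \eqref{k-2dofe3}, that $\rot\vv$ on a face depends solely on the d.o.f.\ \eqref{k-2dofe1} and \eqref{k-2dofe3}, both of which $\vv$ and $\vv^*=\sigma^{{\edge},f}(\vv)$ share by construction. Everything else is bookkeeping with unisolvence of the (serendipity) d.o.f., simple connectivity of $f$ to pass from $\rot=0$ to a gradient, and elementary trace arguments. A small point to state carefully is the normalization of $\xi$ (an additive constant) so that ``bubble'' is literally true; this is harmless because only $\nabla\xi$ enters \eqref{xiv}.
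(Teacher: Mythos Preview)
Your proposal is correct and follows essentially the same route as the paper: define $q^*$ and $\vv^*$ by unisolvence of the serendipity degrees of freedom, use the shared boundary d.o.f.\ to conclude $q-q^*$ vanishes on $\partial f$, and use the shared d.o.f.\ \eqref{k-2dofe1},\eqref{k-2dofe3} to get $\rot(\vv-\vv^*)=0$ and equal tangential traces, whence $\vv-\vv^*=\nabla\xi$ for a scalar bubble $\xi$. The final digression invoking Proposition~\ref{2d-diagram-Vn-Ve} and $\sigma^{{\node},f}(\xi)$ is unnecessary and a bit muddled---the paper simply notes that $\xi$ can be taken as a bubble because its tangential derivative on $\partial f$ vanishes---but this does not affect correctness.
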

\begin{proof} It is clear from the previous discussion that the degrees of freedom \eqref{k-2dofn0}, \eqref{k-2dofn1}, and \eqref{k-2dofn2-sere} determine $q^*$ in a unique way.
As $q$ and $q^*$ share the same boundary degrees of freedom \eqref{k-2dofn0}
 and   \eqref{k-2dofn1}, they will coincide on the whole boundary $\partial f$, so that $q-q^*$ is a bubble. Similarly,
given $\vv$ in $V_{k}^{\edge}(f)$ the degrees of freedom \eqref{k-2dofe1}-\eqref{k-2dofe3}, and \eqref{k-2dofe3-sere} determine uniquely a $\vv^*$ in $SV_{k}^{\edge}(f)$. The two vector valued functions $\vv$ and $\vv^*$, sharing the degrees of freedom \eqref{k-2dofe1}-\eqref{k-2dofe3} must  have the same tangential components on $\partial f$ and
{\it the same $\rot$}. In particular, $\rot(\vv-\vv^*)=0$ and (as $f$ is simply connected)
$\vv-\vv^*$ must be a gradient of some scalar function $\xi$ (that  we can take as a bubble, since its tangential derivative
on $\partial f$ is zero).
\end{proof}

\subsection{ The local spaces on polyhedra}\label{local3}

Let  $\PP$ be a polyhedron, simply connected with all its faces simply connected and convex.
 For each face $f$ we will use the  serendipity spaces $SV^{\node}_{k+1}(f)$ and $SV^{\edge}_{k}(f)$ as defined in \eqref{defVnS} and \eqref{defVeS}, respectively.
We then introduce the three-dimensional  analogues of \eqref{defVnS} and \eqref{defVeS}, that are
 \begin{multline} \label{edge3}
V^{\edge}_{k}(\PP) := \Big\{ \vv\in [L^2(\PP)]^3\,:\div\vv\in \P_{\kdP}(\PP), \: \curl(\curl\vv) \in [\P_{k}(\PP)]^3,\\
\vv^{\tau}_{|f}\in SV^{\edge}_{k}(f)~\forall \mbox{ face }f\in\partial\PP,\; \vv\cdot\tt_e \mbox{ continuous on each edge } e\in\partial\PP
 \Big\} ,
\end{multline}
\begin{equation}\label{nod3}
 V^{\node}_{k+1}(\PP):= \Big\{ q \in C^0(\PP) \: : q_{|f}\in SV^{\node}_{k+1}(f)\quad\forall \mbox{ face }f\in\partial \PP, \:
  \,\Delta\,q\in\P_{\kdP}(\PP)\Big\}.
 \end{equation}
This time however we will also need a Virtual Element {\it face} space (for the discretization of two-forms), that we define as
\begin{equation} \label{face3}
\! {V}^{\face}_{k-1}(\PP) \!:=
\! \Big\{ \ww\!\in\![L^2(\PP)]^3: {\div\ww\!\in\! \P_{k-1}}, \, \curl\ww\!\in\!\! [\P_{k}]^3,\:   \ww\cdot\nn_f\!\in\!\P_{k-1}(f)~\forall f \Big\}.
\end{equation}
We note that in several { cases, in particular for polyhedra with many faces}, the number of {\it internal} degrees of freedom
for the spaces  \eqref{edge3}, \eqref{nod3}, and \eqref{face3} will be {\it more than necessary}. However, at this point, we will not make efforts to diminish them, as we
assume that in practice we could eliminate them by static condensation (or even construct suitable serendipity variants).

Among the same lines of Proposition \ref{bubbleface}, we have now:

\begin{prop}\label{bubblepol}
For every
function $q$ in the (non serendipity!) space
 \begin{equation} \label{node3tilde}
\widetilde{V}^{\node}_{k+1}(\PP) := \Big\{q\in C^0(\PP)\,:\, q_{|f}\in V^{\node}_{k+1}(f)~\forall \mbox{ face }f\in\partial\PP,\mbox{ and } \Delta q\in\P_{k-1}\Big\}
\end{equation}
there exists exactly one element $q^*=\sigma^{\node,\PP}(q)$ in ${V}^{\node}_{k+1}(\PP)$ such that 
\begin{equation}
q^*_{|f}=\sigma^{{\node},f}(q_{|f}) \qquad \forall \mbox{ face } f,\quad
\mbox{and }\qquad\Delta (q-q^*)=0 \mbox{ in }\PP.
\end{equation}
Similarly, for every vector-valued
function $\vv$ in the (non serendipity!) space
 \begin{multline} \label{edge3tilde}
\widetilde{V}^{\edge}_{k}(\PP) := \Big\{ \vv\in [L^2(\PP)]^3\,:\div\vv\in \P_{\kdP}(\PP), \: \curl(\curl\vv) \in [\P_{k}(\PP)]^3\\
\vv^{\tau}_{|f}\in V^{\edge}_{k}(f)~\forall \mbox{ face }f\in\partial\PP,\; \vv\cdot\tt_e \mbox{ continuous on each edge } e\in\partial\PP
 \Big\} ,
\end{multline}
there exists exactly one element $\vv^*=\sigma^{\edge,\PP}(\vv)$ of ${V}^{\edge}_{k}(\PP)$ such that:
\begin{align}
&\bullet \quad\mbox{ on each face $f$ of }\partial\PP: \quad(\vv^*)^{\tau}=\sigma^{{\edge},f}(\vv^{\tau})\quad\mbox{(as defined in \eqref{sigmav})}, \label{vonB}\\[2mm]
%
&\bullet \quad\mbox{ and in }\PP:\qquad\div(\vv-\vv^*)=0\quad\mbox{ and }\quad \bcurl(\vv-\vv^*)={\bf 0} .\label{vinside}
\end{align}
\end{prop}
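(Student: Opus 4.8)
The plan is to prove the two assertions (the nodal one and the edge one) separately, using in each case a \emph{counting/dimension} argument together with the structure already set up in Propositions \ref{bubbleface} and \ref{2d-diagram-Vn-Ve}. For the nodal statement, the idea is: given $q\in\widetilde V^{\node}_{k+1}(\PP)$, first replace its face traces by their serendipity counterparts $\sigma^{{\node},f}(q_{|f})$. By Proposition \ref{bubbleface} these new face traces agree with $q_{|f}$ on every edge of $\partial\PP$, hence they are mutually compatible (they match at shared edges and vertices) and define a genuine continuous datum on $\partial\PP$, lying in $\prod_f SV^{\node}_{k+1}(f)$. Then I would solve the interior problem $\Delta q^* = \Delta q \in \P_{k-1}(\PP)$ with this prescribed boundary trace; since $\PP$ is simply connected this Dirichlet problem has a unique solution, it belongs to $V^{\node}_{k+1}(\PP)$ by construction, and $\Delta(q-q^*)=0$. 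Uniqueness of $q^*$ follows because any two candidates have the same face traces (each face trace in $SV^{\node}_{k+1}(f)$ is determined by the serendipity d.o.f., which are in turn fixed by $q$) and the same Laplacian, hence coincide by uniqueness for the Laplace--Dirichlet problem.

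For the edge statement the strategy is the same but with the $H(\curl)$ structure. Given $\vv\in\widetilde V^{\edge}_{k}(\PP)$, on each face $f$ set $(\vv^*)^{\tau}:=\sigma^{{\edge},f}(\vv^{\tau})$. By Proposition \ref{bubbleface}, $\vv^{\tau}$ and $(\vv^*)^{\tau}$ have the same tangential component along every edge of $\partial f$ and the same $\rot$ on $f$; in particular the edge values $\vv\cdot\tt_e$ are unchanged, so the new face data are compatible across shared edges and define an admissible tangential datum on $\partial\PP$ lying in $\prod_f SV^{\edge}_k(f)$. I would then construct the interior part by prescribing $\div\vv^* = \div\vv \in \P_{k-2}(\PP)$, $\curl(\curl\vv^*)=\curl(\curl\vv)\in[\P_k(\PP)]^3$, and the above tangential trace: this is exactly the data that defines membership in $V^{\edge}_k(\PP)$, and the point is that such a $\vv^*$ exists and is unique. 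Given existence, \eqref{vinside} is immediate since $\div(\vv-\vv^*)=0$ and, because $\curl\curl(\vv-\vv^*)=0$ with $(\vv-\vv^*)^{\tau}$ an $H(\rot)$-bubble on each face (hence $\curl(\vv-\vv^*)\cdot\nn_f=0$ on $\partial\PP$) and $\PP$ simply connected, $\curl(\vv-\vv^*)$ must vanish.

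The cleanest way to get both existence and uniqueness at once is a well-posedness (dimension-counting) argument for the defining boundary value problems. For the edge case, one shows that the map from $V^{\edge}_k(\PP)$ to $\big(\prod_f SV^{\edge}_k(f)\big)\times\P_{k-2}(\PP)\times[\P_k(\PP)]^3$ sending $\ww$ to its tangential face traces, $\div\ww$, and $\curl\curl\ww$ is injective: if all three data vanish, then $\ww$ has zero tangential trace, so $\curl\ww\cdot\nn=0$ on $\partial\PP$, and $\curl\curl\ww=0$ forces $\curl\ww$ to be a curl-free, divergence-free field (Hodge/Poincaré on the simply connected $\PP$) tangent to $\partial\PP$, hence $\curl\ww=0$; then $\ww=\nabla\varphi$ with $\varphi$ vanishing on $\partial\PP$ and $\Delta\varphi=\div\ww=0$, so $\ww=0$. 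Surjectivity onto the relevant affine subspace then follows because the decompositions \eqref{decoPs}, \eqref{decompPs3D} guarantee the ranges of $\div$ and $\curl\curl$ on the virtual space are large enough, and the face traces can be prescribed freely once they are edge-compatible. The nodal case is the analogous (easier) statement for $\Delta$.

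The main obstacle I anticipate is the \emph{compatibility of the serendipity face traces}: one must be certain that replacing $\vv^{\tau}_{|f}$ by $\sigma^{{\edge},f}(\vv^{\tau}_{|f})$ face-by-face still yields a globally consistent tangential field on $\partial\PP$, i.e.\ that the tangential components along each shared edge, and (for the nodal case) the values at shared vertices, are untouched by the serendipity reduction. This is exactly what Proposition \ref{bubbleface} gives — $\sigma^{{\edge},f}$ preserves the d.o.f.\ \eqref{k-2dofe1} (edge moments, hence the full polynomial $\vv\cdot\tt_e$ on each $e$) and $\sigma^{{\node},f}$ preserves \eqref{k-2dofn0}--\eqref{k-2dofn1} — so the difficulty is really just in invoking it carefully on every face and checking that the bubbles $\vv^{\tau}-(\vv^*)^{\tau}$ and $q_{|f}-q^*_{|f}$ have vanishing trace on $\partial f$, which Proposition \ref{bubbleface} asserts. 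Once that bookkeeping is done, the interior solve and the uniqueness are routine consequences of simple-connectedness and the polynomial decompositions recalled in Section \ref{propol}.
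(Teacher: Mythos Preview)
Your nodal argument coincides with the paper's: define $q^*$ on $\partial\PP$ face by face via $\sigma^{{\node},f}$, use Proposition~\ref{bubbleface} to guarantee compatibility along edges, and solve the interior Dirichlet problem with $\Delta q^*=\Delta q$.

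For the edge part your route diverges and contains a genuine gap. You want to \emph{define} $\vv^*$ by prescribing its tangential traces, its divergence, and its $\curl\curl$, and then to deduce existence and uniqueness from well-posedness of this boundary value problem. Your injectivity argument is fine and does give uniqueness. But the surjectivity step --- ``the decompositions \eqref{decoPs}, \eqref{decompPs3D} guarantee the ranges of $\div$ and $\curl\curl$ on the virtual space are large enough'' --- is not a proof. Those decompositions are algebraic identities for polynomial vector fields; they say nothing about solvability of the mixed PDE system (prescribed $\div$, prescribed $\curl\curl$, prescribed tangential trace) for general compatible data on a polyhedron. To close this you would need either a direct PDE existence argument or an exact dimension count matching $\dim V^{\edge}_k(\PP)$ against the constrained target space, neither of which you provide.

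The paper sidesteps this completely with a short constructive trick that exploits the scalar bubble $\xi$ already produced in Proposition~\ref{bubbleface}. By \eqref{xiv}, on each face $f$ one has $\vv^\tau-\sigma^{{\edge},f}(\vv^\tau)=\nabla_f\xi_f$ with $\xi_f$ a bubble on $f$; since every $\xi_f$ vanishes on $\partial f$, the collection $\{\xi_f\}$ glues to a continuous function on $\partial\PP$. Extend it harmonically inside $\PP$ to obtain $\xi$, and set $\vv^*:=\vv-\nabla\xi$. Then $(\vv^*)^\tau=\sigma^{{\edge},f}(\vv^\tau)$ on each face by construction, $\div(\vv-\vv^*)=\Delta\xi=0$, and $\curl(\vv-\vv^*)=\curl\nabla\xi={\bf 0}$ immediately --- no well-posedness result is needed. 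Membership $\vv^*\in V^{\edge}_k(\PP)$ follows because $\div\vv^*=\div\vv\in\P_{k-1}$ and $\curl\curl\vv^*=\curl\curl\vv\in[\P_k]^3$. This is both shorter and avoids the existence issue you left open.
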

\begin{proof} The first part, relative to nodal elements, is obvious: on each face we take as $q^*$ the one given by \eqref{sigmaq} in Proposition \ref{bubbleface}, and then we take $\Delta q^*=\Delta q$ inside. For constructing $\vv^*$ we also start by defining its tangential components on each
face using Proposition \ref{bubbleface}. Now, on each face $f$ we have a (scalar) bubble
$\xi_{f}$ (whose tangential gradient equals the tangential components of $\vv-\vv^*)$, and we construct in $\PP$ the scalar function $\xi$ which is: equal to $\xi_{f}$ on each face $f$, and  harmonic inside $\PP$. Then we set $\vv^*:=\vv+\nabla\xi$, and we check immediately  that $\vv^*$ verifies property \eqref{vonB},  and also properties  \eqref{vinside}, since $\xi$ vanishes on all edges and is harmonic inside.
\end{proof}

\begin{prop}\label{exactN-E}
It holds
\begin{equation}\label{diagramVn-Ve}
\nabla V^{\node}_{k+1}(\PP)=\{ \vv \in V^{\edge}_{k}(\PP):~ \bcurl \vv ={\bf 0}\}.
\end{equation}
\end{prop}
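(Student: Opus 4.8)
The plan is to prove the two inclusions in \eqref{diagramVn-Ve} separately, the easy one first. For ``$\subseteq$'': if $q\in V^{\node}_{k+1}(\PP)$ then $\nabla q\in[L^2(\PP)]^3$ with $\bcurl\nabla q=\mathbf 0$, so the only points to check are that $\nabla q$ lies in $V^{\edge}_k(\PP)$. We have $\div\nabla q=\Delta q\in\P_{k-1}(\PP)$, and $\curl(\curl\nabla q)=\mathbf 0\in[\P_k(\PP)]^3$ trivially; the tangential trace on each face $f$ is $(\nabla q)^\tau_{|f}=\nabla_f(q_{|f})$, which by Proposition \ref{2d-diagram-Vn-Ve} lies in $SV^{\edge}_k(f)$ (precisely the $\rot$-free part of it); and $\nabla q\cdot\tt_e=\partial_t(q_{|e})$ is continuous across edges because $q\in C^0(\PP)$. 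Hence $\nabla q\in V^{\edge}_k(\PP)$ with vanishing curl.

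For the reverse inclusion ``$\supseteq$'', take $\vv\in V^{\edge}_k(\PP)$ with $\bcurl\vv=\mathbf 0$. Since $\PP$ is simply connected, $\vv=\nabla q$ for some $q\in H^1(\PP)$, unique up to a constant; the task is to show $q$ can be taken in $V^{\node}_{k+1}(\PP)$. First, $\Delta q=\div\vv\in\P_{k-1}(\PP)$, which is one of the defining conditions of $V^{\node}_{k+1}(\PP)$. Next I would examine the restriction to a face $f$: the tangential gradient $\nabla_f(q_{|f})$ equals $\vv^\tau_{|f}\in SV^{\edge}_k(f)$, and since $\bcurl\vv=\mathbf 0$ in $\PP$ its tangential trace has $\rot_f\vv^\tau_{|f}=0$; so by Proposition \ref{2d-diagram-Vn-Ve}, $\vv^\tau_{|f}=\nabla_f w_f$ for some $w_f\in SV^{\node}_{k+1}(f)$. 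Because $q_{|f}$ and $w_f$ have the same tangential gradient on the (connected) face, they differ by a constant, so up to adjusting constants face by face we get $q_{|f}\in SV^{\node}_{k+1}(f)$. Finally, $q\in C^0(\PP)$ since $\vv\in[L^2(\PP)]^3$ with an $L^2$ curl forces a well-defined continuous potential (and the edge-continuity of $\vv\cdot\tt_e$ guarantees the face potentials $w_f$ glue consistently along shared edges). Collecting the three properties, $q\in V^{\node}_{k+1}(\PP)$ and $\vv=\nabla q$, which finishes the proof.

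The main obstacle I anticipate is the bookkeeping of constants and the gluing along edges: the face potentials $w_f$ are each determined only up to an additive constant, and one must verify that the constants can be chosen so that the $w_f$ assemble into a single globally continuous function $q$ on $\partial\PP$ (and then on $\PP$). This is where the hypothesis that $\PP$ is simply connected with all faces simply connected, together with the edge-continuity of $\vv\cdot\tt_e$, does the real work: walking around the boundary graph of $\PP$, the increments of the potential along edges are prescribed by $\int_e\vv\cdot\tt_e\,\de$ and are consistent precisely because $\vv$ is a genuine (curl-free, edge-continuous) field, so no monodromy obstruction arises. A secondary point worth a line is that one should confirm that the polynomial-degree constraints carried by $q_{|f}$ on edges ($q_{|e}\in\P_{k+1}(e)$) follow from $\vv\cdot\tt_e\in\P_k(e)$ by integration, which is immediate. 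Everything else is a direct translation of Proposition \ref{2d-diagram-Vn-Ve} from faces to the polyhedron, in the spirit of the construction already used in Proposition \ref{bubblepol}.
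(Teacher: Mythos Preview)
Your proof is correct and follows essentially the same route as the paper's: both inclusions are handled exactly as you describe, with Proposition~\ref{2d-diagram-Vn-Ve} doing the work on each face and the interior condition $\Delta q=\div\vv\in\P_{k-1}(\PP)$ closing the argument. Your anticipated ``gluing of constants'' obstacle is in fact a non-issue, since $q$ is obtained \emph{globally} on $\PP$ as a potential of the curl-free field $\vv$ (so its face restrictions are automatically compatible along edges); the paper accordingly skips this discussion entirely.
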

\begin{proof}
From the above definitions we easily see that the  {\it tangential gradient} of any $q\in V^{\node}_{k+1}(\PP)$, applied {\it face by face}, belongs to $SV^{\edge}_{k}(f)$. Consequently, we also have that $\vv:=\bgrad q$ belongs to $ V^{\edge}_{k}(\PP)$, as $\div\vv\in\P_{\kdP}(\PP)$ and $\bcurl\vv=0$. Hence,
\begin{equation}
\nabla V^{\node}_{k+1}(\PP) \subseteq \{ \vv \in V^{\edge}_{k}(\PP):~ \bcurl \vv ={\bf 0}\}.
\end{equation}
Conversely, assume that a $\vv \in V^{\edge}_{k}(\PP)$ has $ \bcurl \vv ={\bf 0}$. As $\PP$ is simply connected we  have that $\vv=\nabla q$  for some $q\in H^1(\PP)$.  On each face
$f$, the tangential gradient of $q$ (equal to $\vv^{\tau}$) is in $SV^{\edge}_k(f)$ (see \eqref{edge3}), and since  $\rot_f\vv^{\tau}=\bcurl \vv\cdot \nn_f\equiv 0$, from \eqref{2d-Vn-Ve}  we deduce that $q_{|f} \in SV^{\node}_{k+1}(f)$. Hence, the restriction of $q$ to the boundary of $\PP$ belongs to $V^{\node}_{k+1}(\PP)_{|\partial \PP}$. Moreover, $\Delta q=\div \vv$ is in $ \P_{k-1}(\PP)$. Hence, $q\in V^{\node}_{k+1}(\PP)$ and  the proof is concluded.
\end{proof}

In
$V^{\edge}_{k}(\PP) $ we have (see \cite{max2} and \cite{SERE-mix}) the  degrees of freedom
\begin{align}
& \bullet\mbox{ $\forall$  edge $e$: $\int_e (\vv\cdot\tt_e) p_{k}\ds \quad \forall p_{k} \in \P_{k}(e) ,$ } \label{dof-3dek-1}\\
& \bullet \mbox{ $\forall$  face $f$ with $\beta_f\ge 0: \int_{f}\vv^{\tau}\cdot\xxf \: p_{\beta_f}\df \quad \forall p_{\beta_f} \in\P_{\beta_f}(f),$} \label{dof-3dek-2}\\
& \bullet \mbox{ $\forall$ face $f$:  $\int_{f} \rot_f\vv^{\tau} \, p_{k-1}^0 \df \quad \forall p_{k-1}^0 \in \P_{k-1}^0(f)$} \quad\mbox{(for $k > 1$)}, \label{dof-3dek-3}\\
& \bullet \mbox{ $ \int_{\PP}
(\vv\cdot\xx_{\PP}) p_{\kdP}\, \dPP \quad \forall p_{\kdP} \in \P_{\kdP}(\PP)$} ,\label{dof-3dek-4} \\
& \bullet\mbox{ $\int_{\PP} 
(\bcurl\vv)\cdot(\xx_{\PP}\wedge {\bf p}_{k})\,\dPP \quad \forall {\bf p}_{k} \in [\P_{k}(\PP)]^3$},  \label{dof-3dek-5}
\end{align}
where  $\beta_f$ = value of $\beta$ (see  \eqref{defbeta}) on $f$, and
$\xx_{\PP\!}:=\!\xx\! -\!{\bf b}_{\PP}$, with ${\bf b}_{\PP}\!= $barycenter of $\PP$.
\begin{prop}
Out of the above degrees of freedom we can compute the $[L^2(\PP)]^3$  orthogonal projection $\Pi^{0}_{k}$ from $V^{\edge}_{k}(\PP)$ to $[\P_k(\PP)]^3$.
\end{prop}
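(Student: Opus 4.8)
The plan is to use the decomposition \eqref{decompPs3Dter}, namely $(\P_k)^3 = \bcurl(\xx\wedge(\P_k)^3)\oplus\xx\P_{k-1}$, so that it suffices to compute $\int_\PP (\Pi^0_k\vv)\cdot\pp_k\,\dPP = \int_\PP \vv\cdot\pp_k\,\dPP$ for the two families of test polynomials $\pp_k = \xx_\PP\, q_{k-1}$ with $q_{k-1}\in\P_{k-1}(\PP)$, and $\pp_k = \bcurl(\xx_\PP\wedge\mathbf{r}_k)$ with $\mathbf{r}_k\in[\P_k(\PP)]^3$ (up to the harmless translation from $\xx$ to $\xx_\PP$, which only shifts the degrees of the polynomials involved and is absorbed in the spaces). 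For the first family, $\int_\PP \vv\cdot\xx_\PP\, q_{k-1}\,\dPP$ is exactly a linear combination of the volume moments \eqref{dof-3dek-4}, hence directly computable from the degrees of freedom.

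For the second family, the key step is to integrate by parts against the curl. Writing $\pp_k = \bcurl(\xx_\PP\wedge\mathbf{r}_k)$ and using the identity $\int_\PP \vv\cdot\bcurl\bphi\,\dPP = \int_\PP \bcurl\vv\cdot\bphi\,\dPP + \int_{\partial\PP}(\vv\wedge\nn)\cdot\bphi\,\dS$ with $\bphi = \xx_\PP\wedge\mathbf{r}_k$, we get two contributions. The volume term $\int_\PP \bcurl\vv\cdot(\xx_\PP\wedge\mathbf{r}_k)\,\dPP$ is precisely (a linear combination of) the degrees of freedom \eqref{dof-3dek-5}, hence computable. The boundary term $\int_{\partial\PP}(\vv\wedge\nn)\cdot(\xx_\PP\wedge\mathbf{r}_k)\,\dS$ is a sum over faces $f$ of integrals that depend only on the \emph{tangential trace} $\vv^\tau_{|f}$ of $\vv$ on $f$; since $\vv^\tau_{|f}\in SV^{\edge}_k(f)\subseteq V^{\edge}_k(f)$ and the face degrees of freedom \eqref{dof-3dek-1}, \eqref{dof-3dek-2}, \eqref{dof-3dek-3} determine the corresponding degrees of freedom \eqref{k-2dofe1}, \eqref{k-2dofe2} (via $\Pi^{\edge}_S$), \eqref{k-2dofe3} of $V^{\edge}_k(f)$, each such face integral is computable: indeed $(\vv\wedge\nn_f)\cdot(\xx_\PP\wedge\mathbf{r}_k)$ can be rewritten, by the standard vector identity, in terms of $\vv^\tau\cdot(\text{polynomial tangential field})$, and this is exactly the kind of quantity that \eqref{proj-edge-facce} shows to be computable on each face from $\rot_f\vv^\tau$, the edge moments, and the face moments $\int_f\vv^\tau\cdot\xxf\,p\,\df$.

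I expect the main obstacle to be a bookkeeping issue rather than a conceptual one: checking that the polynomial degrees match up so that every face integral genuinely falls within the range covered by the face degrees of freedom. Concretely, $\xx_\PP\wedge\mathbf{r}_k$ restricted to a face has tangential component of degree $\le k+1$, so after pairing with $\vv^\tau$ one is led to test $\vv^\tau$ against tangential polynomials of degree up to $k+1$ on $f$ — exactly the situation handled by \eqref{proj-edge-facce}, where the term $\int_f(\rot\vv)q_{k+2}$ and the edge terms account for the gradient part $\bgrad q_{k+2}$ and the term $\int_f\vv\cdot\xxf\, q_k$ accounts for the $\xxf\P_{k-1}$ part. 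One must also verify that replacing $\xx$ by $\xx_\PP = \xx - \mathbf{b}_\PP$ (and $\xx_f$ similarly on faces) does not change the degrees in an unfavourable way — it does not, since translation by a constant preserves polynomial degree. Once these degree counts are in place, the computability of each term follows from the already-established facts, and the proof concludes.
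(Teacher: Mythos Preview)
Your proposal is correct and follows essentially the same approach as the paper: decompose $(\P_k)^3$ via \eqref{decompPs3Dter}, handle the $\xx_\PP\,\P_{k-1}$ part with the volume moments \eqref{dof-3dek-4}, integrate the $\bcurl(\xx_\PP\wedge(\P_k)^3)$ part by parts to produce a volume term given by \eqref{dof-3dek-5} plus a boundary term that, after the triple-product identity, reduces on each face to testing $\vv^\tau$ against a tangential polynomial of degree $\le k+1$ and is therefore computable via \eqref{proj-edge-facce}. Your extra care about the serendipity reconstruction of the full face moments and the degree bookkeeping is accurate and slightly more explicit than the paper's own write-up.
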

\begin{proof}
Extending the arguments used in \cite{lowest-max3},  and  using \eqref{decompPs3Dter} we have that for any $\pp_k\in (\P_k)^3$ there exist two polynomials, ${\qq}_k\in (\P_k)^3$ and $z_{k-1}\in \P_{k-1}$, such that $\pp_k=\curl(\xxP \wedge {\bf q}_k)+\xxP z_{k-1}$. Hence,  from the definition of projection we have:
\begin{equation}\label{proj-edge-3D}
\int_{\PP} \Pi^{0}_k\vv \cdot \pp_k \dPP:=\int_{\PP} \vv \cdot \pp_k \dPP
=\int_{\PP} \vv \cdot \curl(\xxP \wedge {\qq}_k) \dPP+\int_{\PP}(\vv\cdot\xxP) z_{k-1} \dPP.
\end{equation}
The second integral is given by the d.o.f. \eqref{dof-3dek-4}, while for the first one we have, upon integration by parts:
\begin{equation}\label{proj-edge-3D-a}
\begin{aligned}
\mbox{$\int_{\PP} \vv \cdot$}&\mbox{$\curl(\xxP \wedge \qq_k) \dPP=
{\int_{\PP} \bcurl\vv \!\cdot\! (\xxP \wedge \qq_k) \dPP} \!+\! \int_{\partial \PP} (\vv\wedge \nn) \!\cdot \!(\xxP \wedge{\qq}_k) \dS$}\\
&=\mbox{$\int_{\PP} \bcurl\vv \cdot (\xxP \wedge \qq_k) \dPP+{\int_{\partial \PP} \Big(\nn \wedge(\xxP \wedge\qq_k)\Big) \cdot {\vv}  \dS}$}\\
&=\mbox{$\int_{\PP} \bcurl\vv \cdot (\xxP \wedge \qq_k) \dPP+{\sum_f \int_f \Big(\nn_f \wedge(\xxP \wedge\qq_k)\Big)^{\tau} \cdot {\vv^{\tau}}  \df.}$}
\end{aligned}
\end{equation}
The first term is given by the d.o.f. \eqref{dof-3dek-5}, and the second is computable as in \eqref{proj-edge-facce}.
\end{proof}
 Hence, following the path of Remark \ref{loc-scalp} we can define
a $\mu$-dependent scalar product through the (Hilbert) norm
\begin{equation}\label{norma-nuova}
\|\vv\|^2_{\epsmP}:=\|\mu^{1/2}\Pi^{0}_{k}\vv\|^2_{0,\PP}+h_{\PP}\mu_0\sum_i (dof_i\{(I-\Pi^0_k)\vv\})^2,
\end{equation}
or, for instance,
\begin{equation}\label{norma1}
\|\vv\|^2_{\epsmP}:=\|\mu^{1/2}\Pi^{0}_{k}\vv\|^2_{0,\PP}+h_{\PP}\mu_0\sum_{f\in\partial\PP}\|(I-\Pi^0_k)\vv^{\tau}\|_{V^{\edge}_k(f)}^2
\end{equation}
getting, for positive constants $\alpha_*, \alpha^*$ independent of $h_{\PP}$,
\begin{equation}\label{PSe3k-1}
\alpha_* \mu_0 \|\vv\|_{0,\PP}^2\le \|\vv\|_{e,\mu,\PP}^2\le
\alpha^* \mu_1\|\vv\|_{0,\PP}\qquad\forall \vv\in V^{\rm{e}}_{k}(\PP) .
\end{equation}
We observe that the associated scalar product will satisfy
\begin{equation}\label{stima-edge}
[\vv,\ww]_{0,\PP}\le\Big( [\vv,\vv]_{\epsmP}\Big)^{1/2}\Big( [\ww,\ww]_{\epsmP}\Big)^{1/2}\le\mu_1 \alpha^* \|\vv\|_{0,\PP} \|\ww\|_{0,\PP} ,
\end{equation}
\begin{equation}\label{consiE3k}
\mbox{$[\vv,{\bf p}_{k}]_{\epsmP}=\int_{\PP}\mu\Pi^{0}_{k}\vv\cdot {\bf p}_{k}\dE \qquad\forall \vv\in V^{\edge}_{k}(\PP),\;\forall {\bf p}_{k}\in [\P_{k}(\PP)]^3$}.
\end{equation}

In $V^{\node}_{k+1}(\PP)$ we have the degrees of freedom
\begin{align}
& \bullet \mbox{ $\forall$ vertex $\nu$ the nodal value $q(\nu),$ } \label{dof-3dnk-1}\\
& \bullet \mbox{ $\forall$ edge $e$ and $k\ge 1$ the moments $ \int_e q\, p_{k-1}\ds\quad\forall p_{k-1}\in\P_{k-1}(e),$}\label{dof-3dnk-2}\\
&\bullet
\mbox{ $\forall$ face $f$  with $\beta_f\ge 0$ the moments $\int_{f}(\nabla_f q\cdot \xx_f)\: p_{\beta_f} \df \quad \forall p_{\beta_f} \in\P_{\beta_f}(f)$,}\label{dof-3dnk-3}\\
&\bullet\mbox{ the moments$\int_{\PP} q \: p_{\kdP} \dPP \quad \forall p_{\kdP} \in\P_{\kdP}(\PP).$}\label{dof-3dnk-4}
\end{align}

%
We point out (see \cite{max2}) that the  degrees of freedom \eqref{dof-3dnk-1}-\eqref{dof-3dnk-3} on each face $f$ allow to compute the $L^2(f)$-orthogonal projection operator from $SV^{\node}_{k+1}(f)$ to $\P_{k}(f)$, while the degrees of freedom \eqref{dof-3dnk-4} give us  the $L^2(\PP)$-orthogonal projection operator from $V^{\node}_{k+1}(\PP)$ to $\P_{\kdP}(\PP)$.
Finally, for $V^{\face}_{k-1}(\PP)$ we have the degrees of freedom
\begin{align}
& \bullet\mbox{ $\forall$  face $f$: $\int_f
(\ww\cdot\nn_f) p_{k-1} \df \quad \forall p_{k-1} \in \P_{k-1}(f), $} \label{dof-3dfk-1}\\
& \bullet \mbox{$\int_{\PP}
{\ww\cdot(\bgrad\, p_{k-1}) \dPP \quad \forall p_{k-1} \in \P_{k-1}(\PP), \mbox{ for } k >1 }$}\label{dof-3dfk-2} \\
& \bullet \mbox{$
\int_{\PP}
\ww\cdot (\xx_{\PP}\wedge {\bf p}_{k}) \dPP \quad \forall {\bf p}_{k} \in [\P_{k}(\PP)]^3$}.
 \label{dof-3dfk-3}
\end{align}
{
According to \cite{SERE-mix} we have now that from the above degrees of freedom we can compute the $[L^2(\PP)]^3$-orthogonal projection $\Pi^{0}_{s}$ from $V^{\face}_{k-1}(\PP)$ to $[\P_s(\PP)]^3$ with $s\le k+1$.

 In particular, along the same lines of Remark \ref{loc-scalp} we can define a scalar product $[\ww,\vv]_{\fpsP}$ through the Hilbert norm
\begin{equation}\label{PSf3k}
\|\vv\|_{\fpsP}^2:=\|\Pi^{0}_{k-1}\vv\|_{0,\PP}^2+h_{\PP}\sum_{f}\|(I-\Pi^{0}_{k-1})\vv\cdot \nn_f\|_{0,f}^2,
\end{equation}
and then there exist two positive constants
$\alpha_1,\,\alpha_2$ independent of $h_{\PP}$ such that
\begin{equation}\label{PSf3k-1}
\alpha_1 \|\ww\|_{0,\PP}^2\le \|\ww\|_{\fpsP}^2\le
\alpha_2 \|\ww\|_{0,\PP}^2\qquad\forall \ww\in V^{\face}_{k-1}(\PP),
\end{equation}
and also
\begin{equation}\label{consiF3k}
[\ww,{\bf p}_{k-1}]_{\fpsP}=(\ww, {\bf p}_{k-1})_{0,\PP}\qquad\forall \ww\in V^{\face}_{k-1}(\PP),\;\forall{\bf p}_{k-1}\in [\P_{k-1}(\PP)]^3.
\end{equation}}
Needless to say, instead of  \eqref{PSf3k} we could also consider variants of the type of \eqref{coidof-e} and \eqref{norma-nuova}, using only the values $dof_i$ of the degrees of freedom.

 Note that $\P_{k+1}(\PP)\subseteq V_{k+1}^{\node}(\PP)$, $[\P_{k}(\PP)]^3\subseteq V^{\edge}_{k}(\PP)$, and  $[\P_{k-1}(\PP)]^3\subseteq V^{\face}_{k-1}(\PP)$.
\begin{prop}\label{exact-Ve-Vf}
It holds:
\begin{equation}\label{diagramVe-Vf}
\bcurl V^{\edge}_{k}(\PP) = \{ \ww \in V^{\face}_{k-1}(\PP):~ \div \ww=0\}.
\end{equation}
\end{prop}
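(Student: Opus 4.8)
The plan is to prove the two inclusions separately, following the same strategy used in Proposition \ref{exactN-E} for the pair $(V^{\node}_{k+1},V^{\edge}_k)$. For the inclusion $\bcurl V^{\edge}_{k}(\PP)\subseteq\{\ww\in V^{\face}_{k-1}(\PP):\div\ww=0\}$, I would start from an arbitrary $\vv\in V^{\edge}_{k}(\PP)$ and set $\ww:=\bcurl\vv$. Clearly $\div\ww=\div\bcurl\vv=0$, so it only remains to check that $\ww\in V^{\face}_{k-1}(\PP)$, i.e. the three defining conditions in \eqref{face3}. The condition $\curl\ww\in[\P_k]^3$ is immediate, since $\curl\ww=\curl(\curl\vv)\in[\P_k(\PP)]^3$ by definition of $V^{\edge}_k(\PP)$. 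The condition $\div\ww\in\P_{k-1}$ holds trivially because $\div\ww=0$. The only point needing an argument is $\ww\cdot\nn_f\in\P_{k-1}(f)$ for each face $f$: here I would use that $\ww\cdot\nn_f=\bcurl\vv\cdot\nn_f=\rot_f\vv^{\tau}$ (the $2$d scalar curl of the tangential trace), and $\vv^{\tau}_{|f}\in SV^{\edge}_k(f)\subseteq V^{\edge}_k(f)$ has $\rot\vv^{\tau}\in\P_{k-1}(f)$ by \eqref{k-edgef}. This settles the first inclusion.

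For the reverse inclusion, take $\ww\in V^{\face}_{k-1}(\PP)$ with $\div\ww=0$. Since $\PP$ is simply connected, there exists $\vv\in H(\bcurl;\PP)$ with $\bcurl\vv=\ww$ (defined up to a gradient). The task is to adjust $\vv$ by a gradient so that it lands in $V^{\edge}_{k}(\PP)$, and I expect this to be the main obstacle. One must verify: (i) on each face $f$, the tangential trace $\vv^{\tau}_{|f}$ can be taken in $SV^{\edge}_k(f)$; (ii) $\div\vv\in\P_{k-1}(\PP)$; (iii) $\curl(\curl\vv)=\curl\ww\in[\P_k(\PP)]^3$; (iv) $\vv\cdot\tt_e$ is continuous on each edge. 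Condition (iii) is free. For (ii), one is free to modify $\vv$ by $\nabla\psi$ with $\psi$ chosen so that $\Delta\psi$ absorbs any excess in $\div\vv$; since $\bcurl\vv=\ww$ pins down only the ``curl part'', one can for instance solve $-\Delta\psi=\div\vv-\pi$ for a suitable polynomial $\pi\in\P_{k-1}$, or more cleanly invoke the polynomial-preserving decomposition and the structure of the de Rham complex on $\PP$.

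The heart of the matter is (i) and (iv), i.e. controlling the \emph{boundary behaviour} of the potential $\vv$. On each face $f$, the tangential trace $\vv^{\tau}$ satisfies $\rot_f\vv^{\tau}=\bcurl\vv\cdot\nn_f=\ww\cdot\nn_f\in\P_{k-1}(f)$ and, since the global $\bcurl\vv=\ww$ already has $\div\ww=0$, a compatibility-type argument shows $\div_f\vv^{\tau}$ can be arranged in $\P_k(f)$; the edge traces $\vv^{\tau}\cdot\tt_e=\vv\cdot\tt_e$ can be taken in $\P_k(e)$ by choosing the free gradient appropriately on the $1$-skeleton first, then on the faces, then inside $\PP$ (a standard ``build the potential skeleton-up'' construction, exactly as in the proof of Proposition \ref{bubblepol}). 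This places $\vv^{\tau}_{|f}$ in $V^{\edge}_k(f)$; to get it into the \emph{serendipity} space $SV^{\edge}_k(f)$ one applies $\sigma^{{\edge},f}$ from Proposition \ref{bubbleface}, noting that $\vv-\sigma^{\edge,\PP}(\vv)$ is a gradient (by \eqref{vinside}) and hence changing $\vv$ to $\sigma^{\edge,\PP}(\vv)$ does not alter $\bcurl\vv=\ww$. Continuity of $\vv\cdot\tt_e$ across edges is guaranteed by having fixed the potential on the $1$-skeleton as a single-valued function before extending. With all four conditions met, $\vv\in V^{\edge}_k(\PP)$ and $\bcurl\vv=\ww$, which completes the proof. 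The main technical care is in the simply-connectedness bookkeeping and in verifying that the face-trace of the potential genuinely lies in $V^{\edge}_k(f)$ with the right polynomial degrees; everything else is routine.
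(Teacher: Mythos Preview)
Your proof of the inclusion $\bcurl V^{\edge}_{k}(\PP)\subseteq\{\ww\in V^{\face}_{k-1}(\PP):\div\ww=0\}$ is essentially identical to the paper's: check $\div\ww=0$, $\curl\ww=\curl\curl\vv\in[\P_k]^3$, and $\ww\cdot\nn_f=\rot_f\vv^{\tau}\in\P_{k-1}(f)$ via \eqref{k-edgef}.

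For the reverse inclusion the \emph{endgame} also coincides with the paper: once one has a potential $\vv$ in the non-serendipity space $\widetilde{V}^{\edge}_{k}(\PP)$ with $\bcurl\vv=\ww$, apply Proposition~\ref{bubblepol} to replace $\vv$ by $\vv^*=\sigma^{\edge,\PP}(\vv)\in V^{\edge}_{k}(\PP)$, which has the same curl by \eqref{vinside}. The difference is in how that potential is obtained. The paper simply \emph{cites} the corresponding exact-sequence result for the non-serendipity spaces from \cite{super-misti} (and mentions a dimension count as an alternative). You instead attempt to reconstruct that result by hand, starting from an abstract $H(\bcurl)$ potential and correcting it by gradients ``skeleton-up'' (edges, then faces, then interior). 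This is the right philosophy, but as written it has a soft spot: an $H(\bcurl)$ potential need not have well-defined edge traces, so the sentence ``the edge traces $\vv\cdot\tt_e$ can be taken in $\P_k(e)$ by choosing the free gradient appropriately on the $1$-skeleton'' presupposes more regularity than you have secured; similarly, the claim that $\div_f\vv^{\tau}$ ``can be arranged in $\P_k(f)$'' is asserted rather than shown. These points are exactly what the machinery in \cite{super-misti} handles, and re-deriving them here is non-trivial. The paper's route is therefore more economical; yours would be self-contained if the skeleton construction were carried out carefully (or if you invoked a regular decomposition to upgrade the initial potential before correcting), but as it stands that part is a sketch rather than a proof.
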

\begin{proof}
For every $\vv\in V^{\edge}_{k}(\PP)$ we have that $\ww:=\bcurl\, \vv$ belongs to $V^{\face}_{k-1}(\PP)$. Indeed, on each face $f$ we have that $\ww\cdot\nn_f\equiv\rot_f\vv^{\tau}$
belongs to $\P_{\kr}(f)$ (see \eqref{k-edgef} and \eqref{face3}), and moreover $\div\ww=0$ (obviously)
and $\bcurl\ww\in[\P_{k}(\PP)]^3$ from \eqref{edge3}. Hence,
\begin{equation}\label{added:sub}
\bcurl V^{\edge}_{k}(\PP)\subseteq \{ \ww \in V^{\face}_{k-1}(\PP):~ \div \ww=0\}.
\end{equation}

In order to prove the converse, we first note that from \cite{super-misti} we have that:  if
$\ww$ is in $V^{\face}_{k-1}(\PP)$ with $\div \ww=0$, then $ \ww=\bcurl\vv$ for some $\vv\in\widetilde{V}^{\edge}_{k}(\PP)$ (as defined in \eqref{edge3tilde}). Then we use Proposition \ref{bubblepol} and obtain a $\vv^*\in {V}^{\edge}_{k}(\PP)$ that, according to \eqref{vinside}, has the same $\bcurl$. An alternative proof could be derived by a simple dimensional count, following the same guidelines as in \cite{lowest-max3}.
\end{proof}

\subsection{The global spaces}\label{global}

Let $\Th$ be a decomposition of the computational domain $\Om$
into polyhedra $\PP$. On $\Th$ we make the following assumptions, quite standard in the VEM literature. We assume the existence of a positive constant $\gamma$ such that any  polyhedron $\PP$ of the mesh (of diameter $h_\PP$) satisfies the following conditions:
\begin{equation}\label{mesh-ass}
\begin{array}{ll}
&- \PP \mbox{ is star-shaped with respect to a ball of radius bigger than }\gamma h_\PP;\\
&-  \mbox{each face } f  \mbox{ is star-shaped with respect to a ball of radius  }
\ge \gamma h_P, \\
&- \mbox{each edge  has length bigger than $\gamma h_\PP$}.
\end{array}
\end{equation}
We note that the first two conditions imply that $\PP$ (and, respectively, every face of $\PP$) is simply connected.  At the theoretical level, some of the above conditions could be avoided by using more technical arguments.
We also point out that, at the practical level, as shown by the numerical tests of the Section \ref{sec:NE}, the third condition is negligible since the method seems very robust to degeneration of faces and edges.
On the contrary, although the scheme is quite robust to distortion of the elements, the first condition is more relevant since extremely anisotropic element shapes can lead to poor results. Finally, as already mentioned, for simplicity we also assume that all the faces are convex.

We can now define the global nodal space:
\begin{equation}\label{glo-n}
V^{\node}_{k+1}\equiv V^{\node}_{k+1}(\Om):=\Big\{ q \in H^1_0(\Om) \mbox{ such that } q_{|\PP} \in V^{\node}_{k+1}(\PP)\: \forall \PP \in \Th \Big\},
\end{equation}
 with the obvious degrees of freedom
\begin{align}
& \bullet \mbox{ $\forall$ vertex $\nu$ the nodal value $q(\nu),$ } \label{dof-3dnk-1G}\\
& \bullet \mbox{ $\forall$ edge $e$ and $k\ge 1$ the moments$\; \int_e q\, p_{k-1}\ds\quad\forall p_{k-1}\in\P_{k-1}(e),$}\label{dof-3dnk-2G}\\
&\bullet
\mbox{ $\forall$ face $f$  with $\beta_f\ge 0$ the moments $\int_{f}(\nabla_f q\cdot \xx_f)\: p_{\beta_f} \df \quad \forall p_{\beta_f} \in\P_{\beta_f}(f)$,}\label{dof-3dnk-3G}\\
&\bullet\mbox{ $\forall$ element $\PP$, $k\ge 1$, the moments$\int_{\PP} q \: p_{\kdP} \dPP \quad \forall p_{\kdP} \in\P_{\kdP}(\PP).$}\label{dof-3dnk-4G}
\end{align}
For the global edge space we have
\begin{equation}\label{glo-e}
V^{\edge}_{k}\equiv V^{\edge}_{k}(\Om):=\Big\{ \vv \in H_0(\bcurl;\Om) \mbox{ such that } \vv_{|\PP} \in V^{\edge}_{k}(\PP)\: \forall \PP \in \Th \Big\},
\end{equation}
with the obvious degrees of freedom
\begin{align}
& \bullet \mbox{ $\forall$ edge $e: \int_e (\vv\cdot\tt_e) p_{k}\ds \quad \forall p_{k} \in \P_{k}(e) ,$} \label{dof-3dek-1G}\\
& \bullet \mbox{ $\forall$  face $f$ with $\beta_f\ge 0: \int_{f}\vv^{\tau}\cdot\xxf \: p_{\beta_f}\df \quad \forall p_{\beta_f} \in\P_{\beta_f}(f),$} \label{dof-3dek-3G}\\
& \bullet \mbox{ $\forall$ face $f$:  $\int_{f} \rot_f\vv^{\tau} \, p_{k-1}^0 \df \quad \forall p_{k-1}^0 \in \P_{k-1}^0(f)$} \qquad
\mbox{(for  $k >1$)}, \label{dof-3dek-2G}\\
& \bullet\mbox{ $\forall$ element $\PP: \int_{\PP}
(\vv\cdot\xx_{\PP}) p_{\kdP}\, \dPP \quad \forall p_{\kdP} \in \P_{\kdP}(\PP) ,$}\label{dof-3dek-4G} \\
& \bullet\mbox{ $\forall$ element $\PP:\int_{\PP}
(\bcurl\vv)\cdot(\xx_{\PP}\wedge {\bf p}_{k})\,\dPP \quad \forall {\bf p}_{k} \in [\P_{k}(\PP)]^3$}.  \label{dof-3dek-5G}
\end{align}
 Finally, for the global face space we have:
\begin{equation}\label{globf}
V^{\face}_{k-1}\equiv V^{\rm{f}}_{k-1}(\Om):=\Big\{\ww\in H_0(\div;\Om)\mbox{ such that }\ww_{|\PP}\in V^{\face}_{k-1}(\PP)\:\forall \PP \in \Th\Big\},
\end{equation}
with the degrees of freedom
\begin{align}
& \bullet \mbox{ $\forall$ face $f: \int_f
(\ww\cdot\nn) p_{k-1} \df \quad \forall p_{k-1} \in \P_{k-1}(f), $} \label{dof-3dfk-1G}\\
& \bullet\mbox{ $\forall$ element $\PP:\,
\int_{\PP}
\ww\cdot (\xx_{\PP}\wedge {\bf p}_{k}) \dPP \quad \forall {\bf p}_{k} \in [\P_{k}(\PP)]^3$} , \label{dof-3dfk-2G }
\\
& \bullet\mbox{ $\forall$ element $\PP:~\int_{\PP}
\ww\cdot(\bgrad p_{k-1}) \dPP \quad \forall p_{k-1} \in \P_{k-1}(\PP)\quad k>1.$} \label{dof-3dfk-3G}
\end{align}
It is important to point out that
\begin{equation}\label{inclu3nek}
\nabla V^{\node}_{k+1}\subseteq V^{\edge}_{k}.
\end{equation}
In particular, it is easy to check that from Propositiom \ref{exactN-E} it holds
\begin{equation}\label{rot03k}
\nabla V^{\node}_{k+1}\equiv\{ \vv\in V^{\edge}_{k}\mbox { such that }\bcurl\vv=0\} .
\end{equation}
Similarly, also recalling Proposition \ref{exact-Ve-Vf}, we easily have
\begin{equation}\label{inclu3efk}
\bcurl V^{\edge}_{k}\subseteq V^{\face}_{k-1}.
\end{equation}
For the converse we  follow the same arguments of the proof of Proposition \ref{exact-Ve-Vf}: first using \cite{super-misti}, this time for the global spaces, and then correcting $\vv$ with a $\nabla \xi$ which is single-valued on the faces. Hence
\begin{equation}\label{rot03fk}
\bcurl V^{\edge}_{k}\equiv\{ \ww\in V^{\face}_{k-1}\mbox { such that }\div\ww=0\} .
\end{equation}
Introducing the additional space (for {\it volume} 3-forms)
\begin{equation}\label{glo-v}
V^{\vol}_{k-1}:=\{\gamma\in L^2(\Om)\;\mbox{ such that }\gamma_{|\PP}\in \P_{k-1}(\PP)\;\forall\PP\in\Th\},
\end{equation}
 we also have
\begin{equation}\label{inclu3fvk}
\div V^{\face}_{k-1}\equiv V^{\vol}_{k-1}.
\end{equation}
\begin{prop}
For the Virtual element spaces defined in \eqref{glo-n}, \eqref{glo-e}, \eqref{globf}, and \eqref{glo-v} the following is an exact sequence:
\begin{equation*}
\!\!\R
\vshortarrow{{\rm  i}}
V^{\node}_{k+1}(\Omega)
\vshortarrow{ \bf grad}
V^{\edge}_{k}(\Omega)
\vshortarrow{ \bcurl}
V^{\face}_{k-1}(\Omega)
\vshortarrow{\rm div}
V^{ \vol}_{k-1}(\Omega)
\vshortarrow{\rm o}
0 .
\end{equation*}
\end{prop}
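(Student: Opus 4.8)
The plan is to prove exactness separately at each position of the sequence, reducing every exactness condition to a statement already available from Subsection~\ref{global}. Reading the complex from left to right, the conditions to be checked are: (i) ${\rm i}$ is injective and $\ker(\bgrad)={\rm i}(\R)$ in $V^{\node}_{k+1}(\Omega)$; (ii) $\ker(\bcurl)=\bgrad\, V^{\node}_{k+1}(\Omega)$ in $V^{\edge}_{k}(\Omega)$; (iii) $\ker(\div)=\bcurl\, V^{\edge}_{k}(\Omega)$ in $V^{\face}_{k-1}(\Omega)$; (iv) $\div$ maps $V^{\face}_{k-1}(\Omega)$ onto $V^{\vol}_{k-1}(\Omega)$, after which exactness at the final $0$ is vacuous. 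The crucial observation is that (ii), (iii) and (iv) are \emph{precisely} the identities \eqref{rot03k}, \eqref{rot03fk} and \eqref{inclu3fvk} established above, so very little remains to be done.

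I would dispose of the two endpoints first. Injectivity of ${\rm i}$ is immediate; and since $\Omega$ is connected, any $q\in V^{\node}_{k+1}(\Omega)$ with $\bgrad q=0$ is constant on $\Omega$ and hence belongs to ${\rm i}(\R)$, which gives (i). Condition (iv) is the surjectivity half of \eqref{inclu3fvk}. For (ii), the easy inclusion $\bgrad V^{\node}_{k+1}(\Omega)\subseteq\ker(\bcurl)$ is \eqref{inclu3nek} combined with $\bcurl\circ\bgrad=0$, while the reverse inclusion is exactly \eqref{rot03k} (obtained from the local Proposition~\ref{exactN-E} by gluing across the polyhedra of $\Th$). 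For (iii), $\bcurl V^{\edge}_{k}(\Omega)\subseteq\ker(\div)$ follows from \eqref{inclu3efk} and $\div\circ\bcurl=0$; the reverse inclusion is \eqref{rot03fk}, proved exactly as in Proposition~\ref{exact-Ve-Vf}: given $\ww\in V^{\face}_{k-1}(\Omega)$ with $\div\ww=0$, one first uses the exactness of the larger non-serendipity global complex (from \cite{super-misti}) to write $\ww=\bcurl\vv$ with $\vv$ in the non-serendipity edge space, and then replaces $\vv$ by $\vv+\nabla\xi$, with $\xi$ a scalar field single-valued on all faces (the three-dimensional bubble correction of Proposition~\ref{bubblepol}); this leaves $\bcurl\vv$ unchanged and moves $\vv$ into $V^{\edge}_{k}(\Omega)$.

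I do not expect any genuinely new obstacle at this stage: the whole difficulty has already been absorbed into Propositions~\ref{exactN-E}, \ref{exact-Ve-Vf} and the bubble-correction machinery of Proposition~\ref{bubblepol}. If one insists on naming the delicate point, it is the surjectivity of $\bcurl$ onto the divergence-free subspace of $V^{\face}_{k-1}(\Omega)$ (equivalently, the fact that every curl-free field of $V^{\edge}_{k}(\Omega)$ is a gradient of a field of $V^{\node}_{k+1}(\Omega)$): this goes through only because the degrees of freedom of the serendipity spaces on faces were chosen so that the single-valued gradient correction remains inside the smaller spaces and so that the two-dimensional face identity \eqref{2d-Vn-Ve} holds; everything else is a mechanical transcription of the continuous De Rham diagram.
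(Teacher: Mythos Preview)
Your approach is exactly the one the paper takes: in fact the paper gives no separate proof of this proposition at all, regarding it as an immediate consequence of the identities \eqref{rot03k}, \eqref{rot03fk} and \eqref{inclu3fvk} established just above it, and your write-up simply unpacks that implication node by node.

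One small caveat worth flagging (it affects the paper's statement as much as your argument): by definition \eqref{glo-n}, \eqref{glo-e}, \eqref{globf} the global spaces sit inside $H^1_0(\Omega)$, $H_0(\bcurl;\Omega)$, $H_0(\div;\Omega)$, i.e.\ they carry homogeneous boundary conditions. Consequently the only constant in $V^{\node}_{k+1}(\Omega)$ is $0$, so ${\rm i}$ is not injective in the usual sense, and by the divergence theorem the image of $\div$ lands in the zero-mean subspace of $V^{\vol}_{k-1}(\Omega)$. In other words, with these boundary conditions the natural exact complex is $0\to V^{\node}_{k+1}\to V^{\edge}_{k}\to V^{\face}_{k-1}\to V^{\vol}_{k-1}/\R\to 0$; the version written with $\R$ on the left and full $V^{\vol}_{k-1}$ on the right is the one appropriate to spaces without boundary conditions. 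This is a cosmetic discrepancy in the endpoints rather than a flaw in your reasoning for the interior nodes (ii) and (iii), which are the substantive parts and which you handle correctly.
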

\begin{remark}\label{catenak} Here too it is very important to point out that the inclusions  \eqref{inclu3nek}, \eqref{inclu3efk} and \eqref{inclu3fvk} are
(in a sense) also {\bf practical}, and not only theoretical. By this, more specifically, we mean that: given the degrees of freedom
of a $q\in V^{\node}_{k+1}$ we can compute the corresponding degrees of freedom of $\nabla q$ in $V^{\edge}_{k}$; and given the degrees of freedom of a $\vv\in V^{\edge}_{k}$ we can compute the corresponding degrees of freedom of $\bcurl\,\vv$ in $V^{\face}_{k-1}$; finally (and this is almost obvious) from the degrees of freedom of a $\ww\in V^{\face}_{k-1}$ we can compute its divergence in each element and obtain an element in $V^{\vol}_{k-1}$.\qed
\end{remark}

\subsection{Scalar products for VEM spaces in 3D}

\noindent From the local scalar products in $V^{\edge}_k(\PP)$  we can  also define a scalar product in $V^{\edge}_{k}$ in the obvious way
\begin{equation}\label{PSe3gk}
[\vv,\ww]_{e,\mu}:=\sum_{\PP\in\Th}[\vv,\ww]_{\epsmP}
\end{equation}
and we note that for some constants $\alpha_*$ and $\alpha^*$ independent of $h$
\begin{equation}\label{SP3boundsk}
\alpha_* \mu_0(\vv,\vv)_{0,\Om}\le [\vv,\vv]_{e,\mu}\le \alpha^* \mu_1(\vv,\vv)_{0,\Om}\qquad\forall \vv\in V^{\rm{e}}_{k} .
\end{equation}
It is also important to point out that, using \eqref{consiE3k} we have
\begin{equation}\label{consiE3gk}
[\vv,\pp]_{e,\mu}=(\mu\Pi^{0}_k\vv,\pp)_{0,\Om}\equiv\int_{\Om}\mu\Pi^{0}_k\vv\cdot\pp\dO \quad\forall \vv\in V^{\edge}_{k}, \,\forall\pp\mbox{ piecewise in $(\P_{k})^3$} .
\end{equation}
From \eqref{PSf3k} we can  also define a scalar product in $V^{\face}_{k-1}$ in the obvious way
\begin{equation}\label{PSf3gk}
[\vv,\ww]_{V^{\face}_{k-1}}:=\sum_{\PP\in\Th}[\vv,\ww]_{\fpsP}
\end{equation}
and we note that, for some constants $\alpha_1$ and $\alpha_2$ independent of $h$
\begin{equation}\label{SPf3boundsk}
\alpha_1 (\vv,\vv)_{0,\Om}\le [\vv,\vv]_{V^{\face}_{k-1}}\le \alpha_2 (\vv,\vv)_{0,\Om}\qquad\forall \vv\in V^{\face}_{k-1} .
\end{equation}
Note also that, using \eqref{consiF3k} we have
\begin{equation}\label{consif3k}
[\vv,\pp]_{{V^{\face}_{k-1}}}=(\vv,\pp)_{0,\Om}\equiv\int_{\Om}\vv\cdot\pp\dO \qquad\forall \vv\in V^{\face}_{k-1}, \,\forall\pp \mbox{ piecewise in $(\P_{k-1})^3$} .
\end{equation}


\section{The discrete problem and error estimates}\label{disc-pro}

\subsection{The discrete problem}

\noindent Given $\jj \in H_0(\div;\Om)$ with $\div\jj=0$, we construct its interpolant $\jj_I\in V^{\face}_{k-1}$ that matches all the degrees of freedom \eqref{dof-3dfk-1G}--\eqref{dof-3dfk-3G}:
\begin{align}
& \bullet \mbox{ $\forall f: \int_f
((\jj-\jj_I)\cdot\nn) p_{k-1} \df=0 \; \forall p_{k-1} \in \P_{k-1}(f), $} \label{dof-3dfk-1GI}\\
& \bullet\mbox{ $\forall \PP:\int_{\PP}
(\jj-\jj_I)\!\cdot\!\bgrad\, p_{k-1} \dPP =0\;\forall p_{k-1} \in \P_{k-1}(\PP), k> 1$} \label{dof-3dfk-2GI} \\
& \bullet\mbox{ $\forall \PP:\int_{\PP}
(\jj-\jj_I)\cdot (\xx_{\PP}\wedge {\bf p}_{k}) \dPP=0 \; \forall {\bf p}_{k} \in [\P_{k}(\PP)]^3$}.
 \label{dof-3dfk-3GI}
\end{align}

Then we can introduce the {\bf discretization} of
\eqref{K1_3}:
\begin{equation}\label{K1_3hk}
\left\{
\begin{aligned}
&\mbox{ find  }\HH_h\in V^{\edge}_{k} \mbox{ and }p_h\in V^{\node}_{k+1} \mbox{ such that: }\\
&[\bcurl\HH_h,\bcurl\vv]_{V^{\face}_{k-1}}+[\nabla p_h,\vv]_{e,\mu}~=~[\jj_I,\bcurl\vv]_{V^{\face}_{k-1}}
\quad\forall\vv\in V^{\edge}_{k}\\
&[\nabla q,\HH_h]_{e,\mu}~=~0~\quad\forall q\in V^{\node}_{k+1}.
\end{aligned}
\right.
\end{equation}
We point out that both $\bcurl\HH_h$ and $\bcurl\vv$ (as well as $\jj_I$) are {\it face Virtual Elements in} $V^{\face}_{k-1}(\PP)$ in each polyhedron $\PP$, so that
(taking also into account Remark \ref{catenak}) their {\it face} scalar products are computable as in \eqref{PSf3gk}.  Similarly, from the degrees of freedom of  a $q\in V^{\node}_{k+1}$ we can compute the degrees of freedom of
$\nabla q$, as an element of $V^{\edge}_{k}$, so that the two edge-scalar products in \eqref{K1_3hk} are computable as in \eqref{PSe3gk}.

\begin{prop}
\label{prop:uniqSol}
Problem \eqref{K1_3hk} has a unique solution $(\HH_h,p_h)$, and $p_h\equiv 0$.
\end{prop}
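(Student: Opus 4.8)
The statement has two parts: existence/uniqueness of $(\HH_h,p_h)$, and the fact that $p_h\equiv 0$. The natural strategy is to mimic the continuous argument given right after \eqref{K1_3}, using the discrete De Rham properties established in the previous section. First I would show $p_h=0$ (assuming a solution exists), which is the quick part: take $\vv=\nabla p_h$ in the first equation of \eqref{K1_3hk}. Since $\nabla p_h\in V^{\edge}_k$ by \eqref{inclu3nek} and $\bcurl\nabla p_h=0$ by \eqref{rot03k}, the first term $[\bcurl\HH_h,\bcurl\nabla p_h]_{V^{\face}_{k-1}}$ vanishes, and likewise the right-hand side $[\jj_I,\bcurl\nabla p_h]_{V^{\face}_{k-1}}$ vanishes; hence $[\nabla p_h,\nabla p_h]_{e,\mu}=0$, and by the coercivity bound \eqref{SP3boundsk} (the lower bound with $\mu_0>0$) we get $\nabla p_h=0$ in $\Om$, so $p_h$ is constant; since $p_h\in V^{\node}_{k+1}\subset H^1_0(\Om)$, it vanishes.

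\textbf{Existence and uniqueness.} Problem \eqref{K1_3hk} is a finite-dimensional linear saddle-point system, so it suffices to prove uniqueness of the solution (then existence follows, the system being square). Suppose $\jj_I=0$; I must show $\HH_h=0$ and $p_h=0$. The argument just given already yields $p_h=0$. With $p_h=0$, the first equation reduces to $[\bcurl\HH_h,\bcurl\vv]_{V^{\face}_{k-1}}=0$ for all $\vv\in V^{\edge}_k$; taking $\vv=\HH_h$ and using the coercivity \eqref{SPf3boundsk} of the face scalar product gives $\bcurl\HH_h=0$. By \eqref{rot03k} this means $\HH_h=\nabla\varphi$ for some $\varphi\in V^{\node}_{k+1}$. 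Plugging $q=\varphi$ into the second equation of \eqref{K1_3hk} gives $[\nabla\varphi,\HH_h]_{e,\mu}=[\nabla\varphi,\nabla\varphi]_{e,\mu}=0$, whence $\nabla\varphi=0$ again by \eqref{SP3boundsk}, i.e.\ $\HH_h=0$. This completes uniqueness, hence existence and uniqueness.

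\textbf{Main obstacle.} The only genuine subtlety is the use of the \emph{discrete} exactness relation \eqref{rot03k}, namely that $\bcurl\HH_h=0$ with $\HH_h\in V^{\edge}_k$ forces $\HH_h$ to be a discrete gradient $\nabla\varphi$ with $\varphi$ in the \emph{global} serendipity nodal space $V^{\node}_{k+1}$ (and not merely in $H^1_0(\Om)$); this is exactly what Proposition \ref{exactN-E} together with \eqref{rot03k} guarantees, and it is essential that $\varphi$ be an admissible test function in the second equation. Everything else is a routine coercivity/saddle-point argument; in particular one should note that the coercivity constants in \eqref{SP3boundsk} and \eqref{SPf3boundsk} are genuinely positive (using $\mu_0>0$), so the vanishing of the energy-type quantities really does force the fields to vanish. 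I would also remark in passing that the same computation shows the discrete solution $\HH_h$ satisfies a discrete analogue of $\bcurl\HH_h=\jj_I$ and $\div(\mu\HH_h)=0$ in the appropriate weak sense, paralleling the continuous case, though this is not needed for the present statement.
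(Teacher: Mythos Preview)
Your proposal is correct and follows essentially the same route as the paper: choose $\vv=\nabla p_h$ in the first equation to kill both curl terms and deduce $p_h=0$ via \eqref{SP3boundsk}; then, for the homogeneous problem, use the first equation (with $\vv=\HH_h$) and the coercivity of the face product to get $\bcurl\HH_h=0$, invoke the discrete exactness \eqref{rot03k} to write $\HH_h=\nabla q^*_h$ with $q^*_h\in V^{\node}_{k+1}$, and finish with the second equation. The only additions over the paper's terse version are your explicit appeal to finite dimensionality for existence and the explicit citation of \eqref{SPf3boundsk}, both of which are welcome clarifications.
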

\begin{proof}
Taking $\vv=\nabla p_h$ (as we did for the continuous problem \eqref{K1_3}) in the first equation, and using \eqref{SP3boundsk}  we easily obtain $p_h\equiv0$ for \eqref{K1_3hk} as well. To prove uniqueness of $\HH_h$, set $\jj_I=0$, and let ${\overline \HH}_h$ be the solution of the homogeneous problem. From the first equation we deduce that $\curl\,{\overline \HH}_h=0$. Hence, from \eqref{rot03k} we have ${\overline \HH}_h= \nabla q^*_h$ for some $q^*_h \in V^n_{k+1}$. The second equation and \eqref{SP3boundsk} give then ${\overline \HH}_h=0$.
\end{proof}

In order to study the discretization error  between \eqref{K1_3} and 
\eqref  {K1_3hk} we need  the interpolant $\HH_I\in V^{\edge}_{k}$ of $\HH$, defined through the degrees of freedom \eqref{dof-3dek-1G}-\eqref{dof-3dek-5G}:
\begin{align}
& \bullet\mbox{  $\forall\; e$}: \mbox{$\int_e ((\HH-\HH_I)\cdot\tt_e) p_{k}\ds=0 \quad \forall p_{k} \in \P_{k}(e), $ } \label{intHH1}\\
& \bullet\mbox{ $\forall\;  f$}:  \mbox{$\int_f
\rot_f(\HH-\HH_I)^{\tau}\, p^0_{k-1} \df =0\quad \forall p^0_{k-1} \in \P^0_{k-1}(f)$ (for $k> 1$),} \label{intHH2}\\
& \bullet \mbox{ $ \forall\;  f$ with $\beta_f\ge 0$}: \mbox{$\int_f
((\HH-\HH_I)^{\tau}\cdot\xx_f) p_{\beta_f}\, \df =0\quad \forall p_{\beta_f} \in \P_{\beta_f}(f) ,$} \label{intHH3}\\
& \bullet \mbox{ $ \forall\;  \PP$}: \mbox{$\int_{\PP}
((\HH-\HH_I)\cdot\xx_{\PP}) p_{k-1}\, \dPP =0$}\quad \forall p_{k-1} \in \P_{k-1}(\PP) ,\label{intHH4} \\
& \bullet\mbox{ $\forall\; \PP$}:  \mbox{$\int_{\PP}
\bcurl(\HH-\HH_I)\cdot(\xx_{\PP}\wedge {\bf p}_{k})\,\dPP=0$} \quad \forall {\bf p}_{k} \in [\P_{k}(\PP)]^3.  \label{intHH5}
\end{align}
We have the following result.
\begin{prop} With the choices  \eqref{dof-3dfk-1GI}-\eqref{dof-3dfk-3GI}
and  \eqref{intHH1}-\eqref{intHH5}  we have 
\begin{equation}\label{hailmaryk}
\bcurl(\HH_I)=(\bcurl\HH)_{I}\equiv \jj_I.
\end{equation}
\end{prop}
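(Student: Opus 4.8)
The plan is to show that $\bcurl(\HH_I)$ and $\jj_I$, both of which lie in $V^{\face}_{k-1}$ by the exactness relation \eqref{inclu3efk}, have the same degrees of freedom \eqref{dof-3dfk-1G}--\eqref{dof-3dfk-3G}, and hence coincide. Since $\jj = \bcurl\HH$ and $\jj_I$ is the interpolant of $\jj$, the identity $(\bcurl\HH)_I \equiv \jj_I$ is just a definition; the content is $\bcurl(\HH_I) = \jj_I$. I would first recall that $\bcurl$ maps $V^{\edge}_{k}$ into $V^{\face}_{k-1}$ in a way that is computable at the level of degrees of freedom (Remark \ref{catenak} together with Proposition \ref{exact-Ve-Vf}), so that $\bcurl(\HH_I)$ is a legitimate element of $V^{\face}_{k-1}$ whose d.o.f.\ are determined by those of $\HH_I$. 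Then the whole argument reduces to matching three families of moments against $\bcurl\HH$.

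The three checks are: \textbf{(i)} for each face $f$, the normal moments $\int_f (\bcurl(\HH_I)\cdot\nn_f)\,p_{k-1}\df$. Here $\bcurl(\HH_I)\cdot\nn_f = \rot_f(\HH_I)^{\tau}$, a polynomial in $\P_{k-1}(f)$, which by Stokes' theorem on $f$ together with integration by parts against $p_{k-1}$ is determined by the edge tangential moments \eqref{intHH1} and the interior $\rot_f$-moments \eqref{intHH2} of $(\HH_I)^{\tau}$ — exactly the argument already used in the excerpt to show $\rot\vv$ is computable for $\vv\in V^{\edge}_{k}(f)$. Comparing with the analogous expression for $\bcurl\HH$ and using \eqref{intHH1}, \eqref{intHH2} gives $\rot_f(\HH-\HH_I)^{\tau}$ orthogonal to $\P_{k-1}^0(f)$ and with the same mean value (Stokes), hence $\int_f(\bcurl(\HH-\HH_I)\cdot\nn_f)p_{k-1}\df = 0$. \textbf{(ii)} The moments $\int_{\PP}\bcurl(\HH_I)\cdot(\xx_{\PP}\wedge{\bf p}_k)\dPP$ against $[\P_k(\PP)]^3$: these are literally the d.o.f.\ \eqref{intHH5}, which are set equal to $\int_{\PP}\bcurl\HH\cdot(\xx_{\PP}\wedge{\bf p}_k)\dPP$. \textbf{(iii)} The moments $\int_{\PP}\bcurl(\HH_I)\cdot\bgrad p_{k-1}\dPP$ against $\P_{k-1}(\PP)$ (for $k>1$): integrating by parts, $\int_{\PP}\bcurl(\HH_I)\cdot\bgrad p_{k-1}\dPP = \sum_{f}\int_f (\bcurl(\HH_I)\cdot\nn_f)\,p_{k-1}\df$ since $\div\bcurl = 0$, so this reduces to the face moments already handled in (i). The same computation for $\bcurl\HH$ gives the matching value.

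The subtle point — and what I expect to be the main obstacle — is check \textbf{(i)}: one must be careful that the decomposition of the normal-trace moment of a $\bcurl$ into boundary (edge) data and interior $\rot_f$ data is \emph{consistent across the construction}, i.e.\ that the d.o.f.\ \eqref{intHH2} are exactly the ones (possibly after the serendipity reduction, where only $\rot$-moments survive as interior d.o.f.) needed to pin down $\rot_f(\HH_I)^{\tau}$ in $\P_{k-1}(f)$, and that the Stokes/integration-by-parts identity holds verbatim on each (convex, simply connected) face. This is where one invokes the face-space analysis recalled after \eqref{k-edgef} and the serendipity d.o.f.\ \eqref{k-2dofe3-sere}: on $SV^{\edge}_k(f)$ the $\rot_f$-moments are still available (they were kept as d.o.f.), so the argument goes through unchanged. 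Once (i) is secured, checks (ii) and (iii) are immediate, and matching all d.o.f.\ of two elements of $V^{\face}_{k-1}$ forces $\bcurl(\HH_I) = \jj_I$. \qed
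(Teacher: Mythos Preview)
Your proposal is correct and follows essentially the same route as the paper's proof: match the three families of degrees of freedom of $V^{\face}_{k-1}$, handling the face normal moments via $\bcurl\vv\cdot\nn_f=\rot_f\vv^{\tau}$ together with \eqref{intHH1}--\eqref{intHH2} and Stokes, reducing the $\bgrad p_{k-1}$ moments to the face ones by integration by parts, and reading the $\xx_{\PP}\wedge{\bf p}_k$ moments directly from \eqref{intHH5}. The only cosmetic difference is the order in which the three checks are presented.
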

\begin{proof} We should show that the {\it face degrees of freedom} \eqref{dof-3dfk-1G}-\eqref{dof-3dfk-3G} of the difference  $\bcurl\HH_I - \jj_I$
are  zero, that is:
\begin{align}
& \bullet\, \mbox{$\forall\, f: \int_f
((\bcurl\HH_I -\jj_I)\cdot\nn) {p_{k-1} \df =0\quad \forall p_{k-1}\! \in\! \P_{k-1}(f)}$}  \label{dof-3dfk-1Gp}\\
& \bullet\, \mbox{$\forall\,\PP:\,\int_{\PP}
(\bcurl\HH_I - \jj_I)\cdot {\bgrad p_{k-1} \dPP =0 \quad \forall p_{k-1}\! \in \!\P_{k-1}(\PP) }$}\label{dof-3dfk-2Gp} \\
& \bullet\,\mbox{$\forall\,\PP:\int_{\PP}
(\bcurl\HH_I - \jj_I)\cdot (\xx_{\PP}\wedge {\bf p}_{k}) \dPP =0\quad \forall {\bf p}_{k}\! \in \![\P_{k}(\PP)]^3$}
 \label{dof-3dfk-3Gp}
\end{align}
 From  the interpolation  formulas \eqref{dof-3dfk-1GI}-\eqref{dof-3dfk-3GI} we see that in \eqref{dof-3dfk-1Gp}-\eqref{dof-3dfk-3Gp} we can replace $\jj_I$ with $\jj$ (that in turn is equal to $\bcurl\HH$. Hence \eqref{dof-3dfk-1Gp}-\eqref{dof-3dfk-3Gp} become
\begin{align}
& \bullet\mbox{$\forall f: \int_f
\bcurl(\HH_I-\HH)\cdot\nn \,p_{k-1} \df =0\; \forall p_{k-1} \in \P_{k-1}(f), $} \label{ave1}\\
& \bullet\mbox{$ \forall \PP: \int_{\PP}\bcurl(\HH_I-\HH)\cdot {\bgrad p_{k-1} \dPP =0\;\forall p_{k-1} \in \P_{k-1}(\PP)},$} \label{ave2} \\
& \bullet\mbox{$\forall \PP: \int_{\PP}
\bcurl(\HH_I-\HH)\cdot (\xx_{\PP}\wedge {\bf p}_{k}) \dPP =0\; \forall {\bf p}_{k} \in [\P_{k}(\PP)]^3$}. \label{ave3}
\end{align}
Observing that \eqref{intHH1} and \eqref{intHH2} imply that
$$
\int_f
\rot_f(\HH-\HH_I)^{\tau}\, p_{k-1} \df =0\quad \forall p_{k-1} \in \P_{k-1}(f),
$$
and recalling that  on each $f$ the normal component of $\bcurl(\HH_I -\HH)$ is equal to the $\rot_f$ of the tangential components $(\HH_I-\HH)^{\tau}$, we deduce
\begin{equation*}
\int_f\bcurl(\HH_I -\HH)\cdot\nn p_{k-1} \df\equiv\int_f\rot_f(\HH_I-\HH)^{\tau}\,p_{k-1}\df =0.
\end{equation*}
Hence, \eqref{ave1} is satisfied. Next, we note that, having already \eqref{ave1} on each face, the equation \eqref{ave2} follows immediately with an integration by parts on $\PP$.
Finally, \eqref{ave3} is the same as \eqref{intHH5}, and the proof is concluded.
\end{proof}

We observe now that,  once we know that $p_h=0$, the first equation of \eqref{K1_3hk} reads
 \begin{equation}
 [\bcurl\HH_h,\bcurl\vv]_{V^{\face}_{k-1}}=
[\jj_I,\bcurl\vv]_{V^{\face}_{k-1}} \;\forall \vv\in V^{\edge}_{k},
 \end{equation}
 that in view of \eqref{hailmaryk} becomes
  \begin{equation}
 [\bcurl\HH_h-\bcurl\HH_I,\bcurl\vv]_{V^{\face}_{k-1}}=0
 \quad\forall \vv\in V^{\edge}_{k}.
 \end{equation}
 Using $\vv=\HH_h-\HH_I$ and \eqref{SPf3boundsk}, this easily implies
   \begin{equation}\label{bingo3k}
 \bcurl\HH_h=\bcurl\HH_I=\jj_I .
 \end{equation}

\subsection{Error estimates}\label{theo:est}

Let us bound the error $\HH-\HH_h$ in terms of approximation errors for $\HH$. From \eqref{bingo3k} we have
\begin{equation}\label{roteq3Dk}
\bcurl (\HH_I-\HH_h)=0 ,
\end{equation}
and therefore, from \eqref{diagramVn-Ve},
\begin{equation}\label{eungrad-a}
\HH_I-\HH_h= \nabla q^*_h \mbox{ for some }q^*_h\in V^{\node}_{k+1}.
\end{equation}
On the other hand, using \eqref{SP3boundsk} we have
\begin{equation}\label{stima-a1}
\alpha_*\mu_0\|\HH_I-\HH_h\|_{0,\Om}^2\le [\HH_I-\HH_h, \HH_I-\HH_h]_{e,\mu} .
\end{equation}
Then:
\begin{equation*}
\begin{aligned}
\alpha_*&\mu_0\|\HH_I-\HH_h\|_{0,\Om}^2\le[\HH_I-\HH_h, \HH_I-\HH_h]_{e,\mu}\\[2mm]
=&\mbox{(use \eqref{eungrad-a}) }[\HH_I-\HH_h, \nabla q^*_h]_{e,\mu}\\[2mm]
=&\mbox{(use the second of \eqref{K1_3hk}) }[\HH_I, \nabla q^*_h]_{e,\mu}\\[2mm]
=&\mbox{(add and subtract  $\Pi^{0}_k\HH$) }[\HH_I-\Pi^{0}_k\HH, \nabla q^*_h]_{e,\mu}+[\Pi^{0}_k\HH,\nabla q^*_h]_{e,\mu}\\[2mm]
=&\mbox{(use \eqref{consiE3gk}) }[\HH_I-\Pi^{0}_k\HH, \nabla q^*_h]_{e,\mu}+(\Pi^{0}_k\HH, \mu\Pi^{0}_k\nabla q^*_h)_{0,\Om}\\[2mm]
=&\mbox{(use the $2^\text{nd}$ of \eqref{K1_3}) }\underbrace{[\HH_I-\Pi^{0}_k\HH, \nabla q^*_h]_{e,\mu}}_{I}+\underbrace{(\Pi^{0}_k\HH, \mu\Pi^{0}_k\nabla q^*_h)_{0,\Om}
- (\HH, \mu\nabla q^*_h)_{0,\Om}}_{II}
\end{aligned}
\end{equation*}
For the first term we use \eqref{stima-edge} to get
\begin{equation}\label{est1}
I \le \mu_1\alpha^* \|\HH_I-\Pi^{0}_k\HH\|_{0,\Om} \|\nabla q^*_h\|_{0,\Om}.
\end{equation}
Next, following arguments similar to \cite{variable-primal} (Lemma 5.3), we obtain:
\begin{align}\label{est2}
II&=(\Pi^{0}_k\HH, \mu\Pi^{0}_k\nabla q^*_h)_{0,\Om}\!- \!(\HH, \mu\nabla q^*_h)_{0,\Om}\!+\!(\HH, \mu\Pi^{0}_k\nabla q^*_h)_{0,\Om}-(\HH, \mu\Pi^{0}_k\nabla q^*_h)_{0,\Om} \nonumber\\[2mm]
&=(\Pi^{0}_k\HH-\HH, \mu\Pi^{0}_k\nabla q^*_h)_{0,\Om}
+(\mu\HH, \Pi^{0}_k\nabla q^*_h-\nabla q^*_h)_{0,\Om}  \nonumber\\[2mm]
&=(\Pi^{0}_k\HH-\HH, \mu\Pi^{0}_k\nabla q^*_h)_{0,\Om}+(\mu\HH-\Pi^{0}_k\mu\HH, \Pi^{0}_k\nabla q^*_h-\nabla q^*_h)_{0,\Om} \\[2mm]
&\le \|\Pi^{0}_k\HH-\HH\|_{0,\Om}\|\mu\Pi^{0}_k\nabla q^*_h\|_{0,\Om}+\|\mu\HH-\Pi^{0}_k\mu\HH\|_{0,\Om}\|\Pi^{0}_k\nabla q^*_h-\nabla q^*_h\|_{0,\Om} \nonumber\\[2mm]
&\le \mu_1\|\Pi^{0}_k\HH-\HH\|_{0,\Om} \|\nabla q^*_h\|_{0,\Om}+\|\mu\HH-\Pi^{0}_k\mu\HH\|_{0,\Om}\|\nabla q^*_h\|_{0,\Om} . \nonumber
\end{align}
Inserting \eqref{est1}-\eqref{est2} in the above estimate we deduce
\begin{multline*}
\alpha_*\mu_0\|\HH_I-\HH_h\|_{0,\Om}^2\le \\
\Big(\mu_1  \alpha^* \|\HH_I-\Pi^{0}_k\HH\|_{0,\Om}
+\mu_1\|\Pi^{0}_k\HH-\HH\|_{0,\Om}+\|\mu\HH-\Pi^{0}_k\mu\HH\|_{0,\Om}\Big)\|\nabla q^*_h\|_{0,\Om}
\end{multline*}
that implies immediately (since $\alpha^*\ge 1$)
\begin{equation*}
\|\HH_I-\HH_h\|_{0,\Om}\le\frac{\mu_1\alpha^*}{\mu_0\alpha_*}\Big(  \|\HH_I-\Pi^{0}_k\HH\|_{0,\Om}
+\|\Pi^{0}_k\HH-\HH\|_{0,\Om}\Big) +\frac{1}{\mu_0\alpha_*}\|\mu\HH-\Pi^{0}_k\mu\HH\|_{0,\Om}.
\end{equation*}
Summarizing:
\begin{thm}\label{Th:estimate}
Problem \eqref{K1_3hk} has a unique solution, and we have
\begin{equation}\label{errH}
\|\HH-\HH_h\|_{0,\Om}\le C\, \Big(\|\HH-\HH_I\|_{0,\Om}+\|\Pi^{0}_k\HH-\HH\|_{0,\Om}+\|\mu\HH-\Pi^{0}_k\mu\HH\|_{0,\Om}\Big),
\end{equation}
with $C$ a constant depending on $\mu$ but independent of the mesh size. Moreover,
\begin{equation}\label{errj}
\|\bcurl (\HH -  \HH_h)\|_{0,\Om} = \|\jj-\jj_I\|_{0,\Om}.
\end{equation}
\end{thm}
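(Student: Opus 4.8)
The plan is to collect the ingredients already assembled above, in the computation preceding the statement. Existence and uniqueness of the discrete pair $(\HH_h,p_h)$, together with $p_h\equiv0$, is precisely Proposition~\ref{prop:uniqSol}, so that part requires nothing new.

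For the $L^2$ bound \eqref{errH} I would estimate $\HH_I-\HH_h$ and then conclude by the triangle inequality $\|\HH-\HH_h\|_{0,\Om}\le\|\HH-\HH_I\|_{0,\Om}+\|\HH_I-\HH_h\|_{0,\Om}$. To bound $\HH_I-\HH_h$, start from the coercivity estimate \eqref{stima-a1}; since $\bcurl(\HH_I-\HH_h)=0$ by \eqref{roteq3Dk}, the exact-sequence identity \eqref{diagramVn-Ve} gives $\HH_I-\HH_h=\nabla q^*_h$ for some $q^*_h\in V^{\node}_{k+1}$ as in \eqref{eungrad-a}. Then one expands $[\HH_I-\HH_h,\nabla q^*_h]_{e,\mu}$: use the second equation of \eqref{K1_3hk} to drop $\HH_h$, add and subtract $\Pi^{0}_k\HH$, apply the consistency relation \eqref{consiE3gk}, and finally invoke the second equation of the continuous problem \eqref{K1_3}. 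This produces the split into the terms $I$ and $II$ displayed above; bound $I$ by the continuity \eqref{stima-edge}, and $II$ by adding and subtracting $\HH$ and $\Pi^{0}_k\nabla q^*_h$ so that each surviving inner product pairs a projection error against a bounded factor (exploiting that $\Pi^{0}_k$ is an $L^2$-projection) and then applying Cauchy--Schwarz, in the spirit of Lemma~5.3 of \cite{variable-primal}. Every resulting term carries a factor $\|\nabla q^*_h\|_{0,\Om}=\|\HH_I-\HH_h\|_{0,\Om}$; dividing through by it, and then using $\|\HH_I-\Pi^{0}_k\HH\|_{0,\Om}\le\|\HH_I-\HH\|_{0,\Om}+\|\HH-\Pi^{0}_k\HH\|_{0,\Om}$, gives \eqref{errH} with $C$ depending only on $\mu_0,\mu_1$ and the stability constants $\alpha_*,\alpha^*$ of \eqref{SP3boundsk}, hence independent of the mesh size.

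For the $\bcurl$ identity \eqref{errj}, recall that the Kikuchi solution of \eqref{K1_3} has $p=0$ and therefore $\bcurl\HH=\jj$, while on the discrete side \eqref{bingo3k} gives $\bcurl\HH_h=\jj_I$; subtracting and taking $L^2(\Om)$-norms yields $\|\bcurl(\HH-\HH_h)\|_{0,\Om}=\|\jj-\jj_I\|_{0,\Om}$. The only genuinely delicate point is bookkeeping: organising the cancellation that produces term $II$ so that each inner product there really is the pairing of a projection error with a bounded quantity, and tracing the constants to confirm that $C$ is mesh-independent. No ingredient beyond the exact sequence, the scalar-product consistency relations \eqref{consiE3gk}, and the norm equivalence \eqref{SP3boundsk} is needed.
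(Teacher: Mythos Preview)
Your proposal is correct and follows essentially the same approach as the paper: the same coercivity--exact-sequence--consistency chain to control $\|\HH_I-\HH_h\|_{0,\Om}$, the same split into $I$ and $II$ with the Lemma~5.3-style manipulation for $II$, and the same use of \eqref{bingo3k} together with $\bcurl\HH=\jj$ for \eqref{errj}. The only (minor) addition is that you make explicit the two triangle-inequality steps needed to pass from $\|\HH_I-\Pi^0_k\HH\|_{0,\Om}$ and $\|\HH_I-\HH_h\|_{0,\Om}$ to the form stated in \eqref{errH}, which the paper leaves implicit.
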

The error bounds in \eqref{errH} and \eqref{errj} could then be expressed in terms of powers of $h$ (a suitable indicator of the mesh fineness) and of the regularity properties of $\HH$ and $\jj$, using classical approximation properties of VEM spaces, as described for instance
 in \cite{Acoustic, BLR-Stab, Steklov-VEM, Brenner, Brenner-2, Chen-Huang}. 
If the data $\mu$ and the solution $\HH$ are sufficiently regular, one obtains from \eqref{errH} that
\begin{equation}\label{piripacchio}
\|\HH -\HH_h\|_{0,\Om} \le C h^s \| \HH \|_{s,\Omega} \qquad 0 \le s \le {k+1} ,
\end{equation}
where the constant $C$ depend only on the polynomial degree $k$, the mesh regularity parameter $\gamma$ and on $\| \mu \|_{W^{k+1,\infty}(\Omega_h)}$.

\begin{remark}
By inspecting the proof of Theorem \ref{Th:estimate} we notice that, for this particular problem, the consistency property
\eqref{consif3k} for the space $V^{\face}_{k-1}$ is never used. Since only property \eqref{SPf3boundsk} is needed, in $V^{\face}_{k-1}$ we could simply take, for instance, as {\it scalar product in $V^{\face}_{k-1}$} the one (much cheaper to compute) associated to the norm
\begin{equation}\label{esc-bdf}
\|\vv\|_{V^{\face}_{k-1}}^2:=\sum_i (dof_i(\vv))^2 ,
\end{equation}
where  $dof_i$ are the degrees of freedom in $V^{\face}_{k-1}$ properly scaled.
\hfill \qed
\end{remark}

\section{Numerical Results}\label{sec:NE}

In this section we numerically validate the proposed VEM approach.
More precisely, we will focus on two main aspects of this method.
We will first show that we recover the theoretical convergence rate for standard and serendipity VEM,
then we compare these two approaches in terms of number of degrees of freedom.
For the present study we consider the cases $k=1$ and $k=2$.
A lowest order case (not belonging to the present family) has been already discussed  in~\cite{lowest-max3}.

In the following two tests we use four different types of decompositions of $[0,\,1]^3$:
\begin{itemize}
 \item \textbf{Cube}, a mesh composed by cubes;
 \item \textbf{Nine}, a regular mesh composed by 9-faced polyhedrons in accordance with a periodic pattern; 
 \item \textbf{CVT}, a Voronoi tessellation obtained by a standard Lloyd algorithm~\cite{cvtPaper};
 \item \textbf{Random}, a Voronoi tessellation associated with a set of seeds randomly 
 distributed inside $\Omega$.
\end{itemize}
Note that the meshes taken into account are of increasing complexity; 
in particular, the meshes \textbf{CVT} and \textbf{Random} have polyhedra with small faces and edges.

All discretizations have been generated with the c++ library \texttt{voro++}~\cite{voroPlusPlus}
and we exploit the software PARDISO~\cite{pardiso-6.0a,pardiso-6.0b} to solve the resulting linear systems.
\begin{figure}[!htb]
\begin{center}
\begin{tabular}{ccc}
\includegraphics[width=0.35\textwidth]{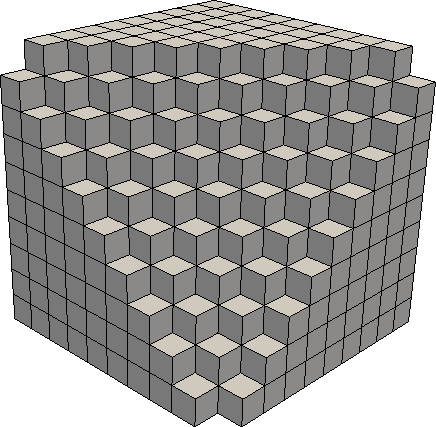} &\phantom{mm} & 
\includegraphics[width=0.35\textwidth]{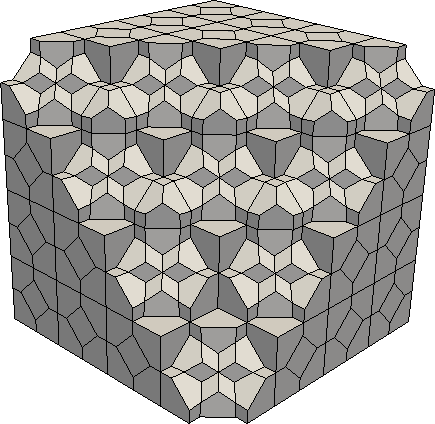}\\[1em]
Cube & &Nine \\[1em]
\includegraphics[width=0.35\textwidth]{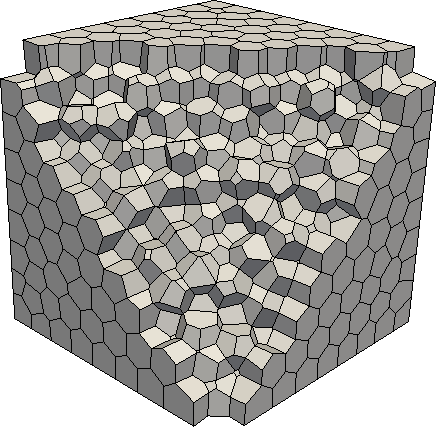} &\phantom{mm} &
\includegraphics[width=0.35\textwidth]{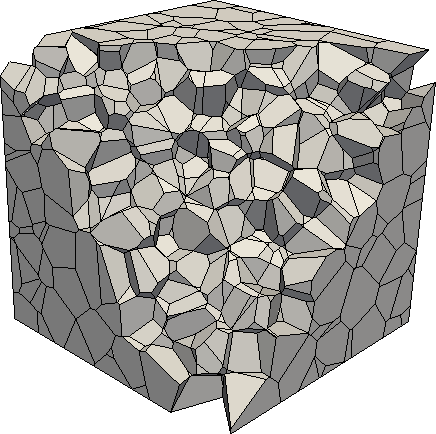}\\[1em]
CVT        & &Random 
\end{tabular}
\end{center}
\caption{A sample of the used meshes.}
\label{fig:cubes}
\end{figure}
In order to study the error convergence rate, for each type of mesh
we consider a sequence of three progressive refinements  composed by approximately 27, 125 and 1000 polyhedrons.
Then, we associate with each mesh a mesh-size 
$$
h:= \frac{1}{N_{\PP}} \sum_{i=1}^{N_{\PP}} h_{\PP}\,,
$$
where $N_{\PP}$ is the number of polyhedrons $\PP$ in the mesh and $h_{\PP}$ is the diameter of $\PP$.

Since $\HH_h$ is virtual, we use its projection $\Pi^0_k\HH_h$ to compute the $L^2$-error, i.e.,
the following norm is used as an indicator of the $L^2$-error:
$$
\frac{||\HH - \Pi^0_k\HH_h||_{0,\Omega}}{||\HH||_{0,\Omega}}\,.
$$
The expected convergence rate is $O(h^{k+1})$, see~\eqref{piripacchio}.

\vskip4truecm
{\color{white} .}

\noindent\textsl{\bf Test case 1: $h$-analysis}

We consider a problem 
with a constant permeability $\mu(\X)=1$. We take as exact solution 
\[
\HH(x,\,y,\,z) := \frac{1}{\pi}\left(\begin{array}{c}
                                     \sin(\pi y)- \sin(\pi z)\\
                                     \sin(\pi z)- \sin(\pi x)\\
                                     \sin(\pi x)- \sin(\pi y)
                                     \end{array}\right) ,
\]
and chose right-hand side and 
boundary conditions accordingly. 

In Figure~\ref{fig:convEse1} we show the convergence curves for each set of meshes.
The error behaves as expected ($O(h^2)$ and $O(h^3)$ for $k=1$ and $k=2$, respectively).

\begin{figure}[!htb]
\begin{center}
\begin{tabular}{cc}
\includegraphics[height=0.35\textwidth]{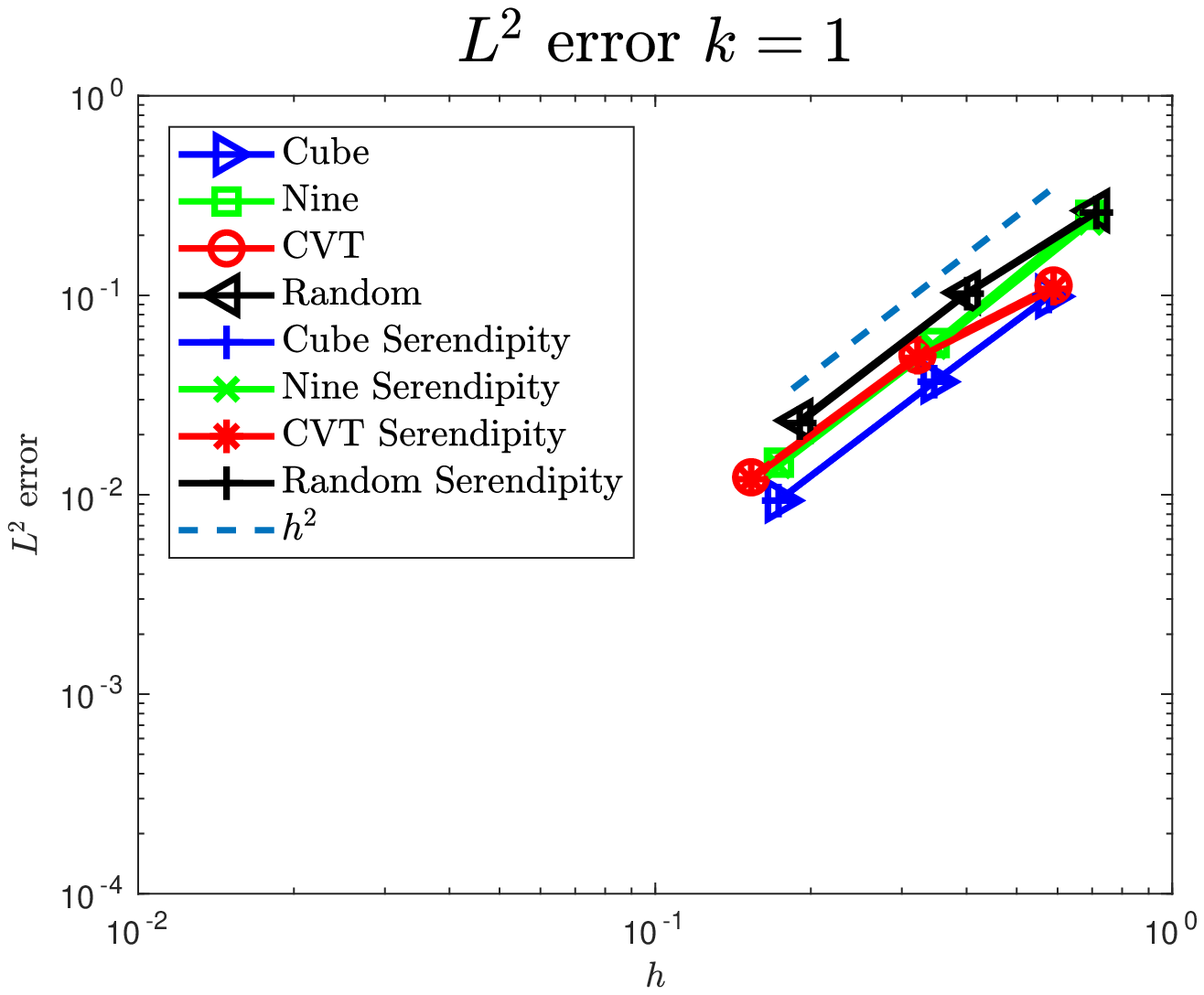} &
\includegraphics[height=0.35\textwidth]{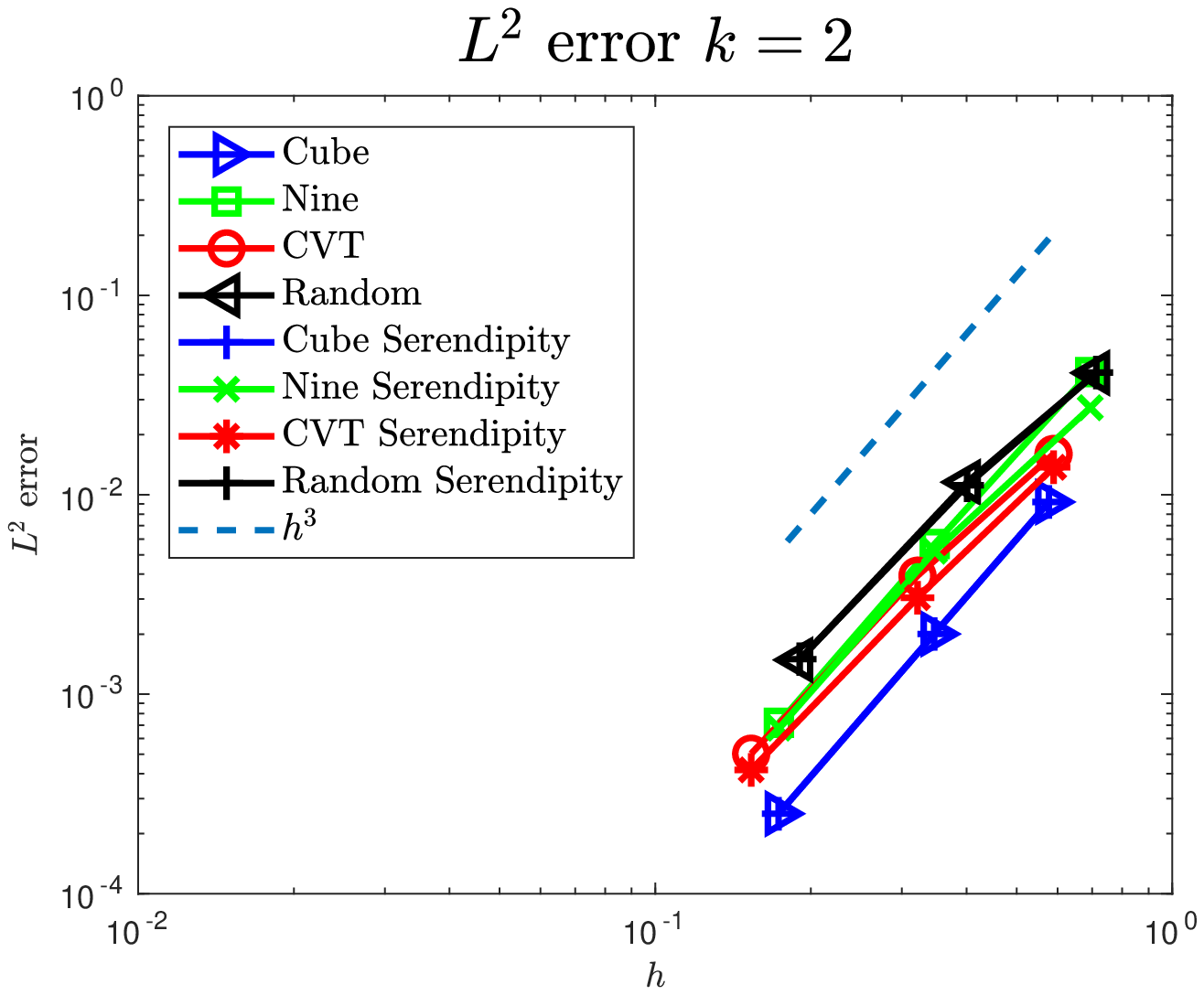}
\end{tabular}
\end{center}
\caption{Test case 1: convergence trend of the $L^2$-error for both standard and serendipity approach: 
case $k=1$ left, $k=2$ right.}
\label{fig:convEse1}
\end{figure}

From Figure~\ref{fig:convEse1} 
we also observe that 
we get almost the same values when we consider the standard or the serendipity approach.
These two methods are equivalent in terms of error, but the serendipity approach requires fewer degrees of freedom.
To better quantify the gain in terms of computational effort,
we compute the quantity 
$$
\texttt{gain} := \frac{\# dof_f -\# dof^S_f}{\# dof_f}\, 100\%\,,
$$
where $\# dof_f$ and $\# dof^S_f$ are the number of face degrees of freedom in  standard and  serendipity VEM.
We underline that in this computation we do not take into account the internal degrees of freedom 
since they can be removed via static condensation. 
As we can see from the data in Table~\ref{tab:gain}, the gain is remarkable (almost $50\%$ of the face d.o.f.s). Note that this also reflects on a much better performance of several solvers of the final linear system.

\begin{table}[!htb]
\begin{center}
\begin{tabular}{|c|c|c|c|c|c|c|c|c|}
\multicolumn{1}{c}{}&\multicolumn{8}{c}{\texttt{gain}}\\
\cline{2-9}
\multicolumn{1}{c|}{}&\multicolumn{4}{c|}{$k=1$}&\multicolumn{4}{c|}{$k=2$}\\[0.2em]
\hline
$\sim N_P$ &Cube &Nine &CVT &Random       &Cube &Nine &CVT &Random \\
\hline                                  
27   &56.6\% &51.0\% &50.2\% &50.3\% &56.4\% &52.0\% &49.9\% &50.4\% \\ 
125  &59.5\% &53.6\% &50.5\% &50.1\% &58.5\% &54.1\% &51.6\% &50.2\% \\ 
1000 &61.8\% &54.9\% &50.3\% &49.8\% &60.2\% &55.0\% &44.3\% &49.9\% \\ 
\hline
\end{tabular}
\end{center}
\caption{Test case 1: values of  {\rm{\texttt{gain}}} for each type of mesh taken into account.}
\label{tab:gain}
\end{table}

\newpage
\noindent\textsl{\bf Test case 2: $h$-analysis with a variable $\mu(\X)$}

We consider now a problem with variable 
permeability $\mu(\X)$ given by
$$
\mu(x,\,y,\,z) := 1+x+y+z .
$$
 We take as exact solution 
$$
\HH(x,\,y,\,z) := \frac{1}{(1+x+y+z)}\left(\begin{array}{c}
                                           \sin(\pi y)\\
                                           \sin(\pi z)\\
                                           \sin(\pi x)
                                           \end{array}\right) ,
$$
and we choose again right-hand side and 
boundary conditions accordingly.
In Figure~\ref{fig:convEse2} we provide the convergence curves for each set of meshes. The $L^2$-error behaves again as expected.

\begin{figure}[!htb]
\begin{center}
\begin{center}
\begin{tabular}{cc}
\includegraphics[height=0.35\textwidth]{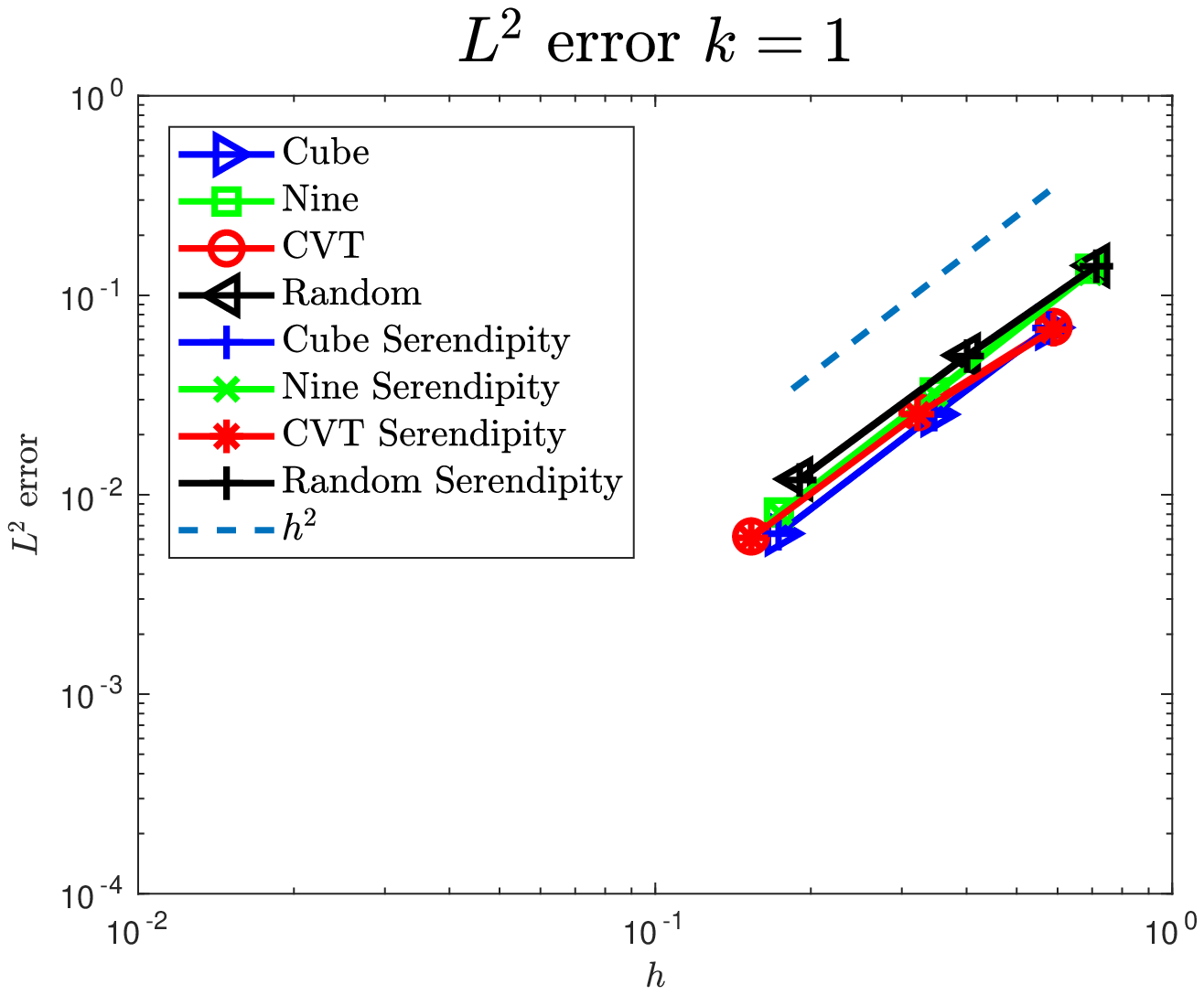} &
\includegraphics[height=0.35\textwidth]{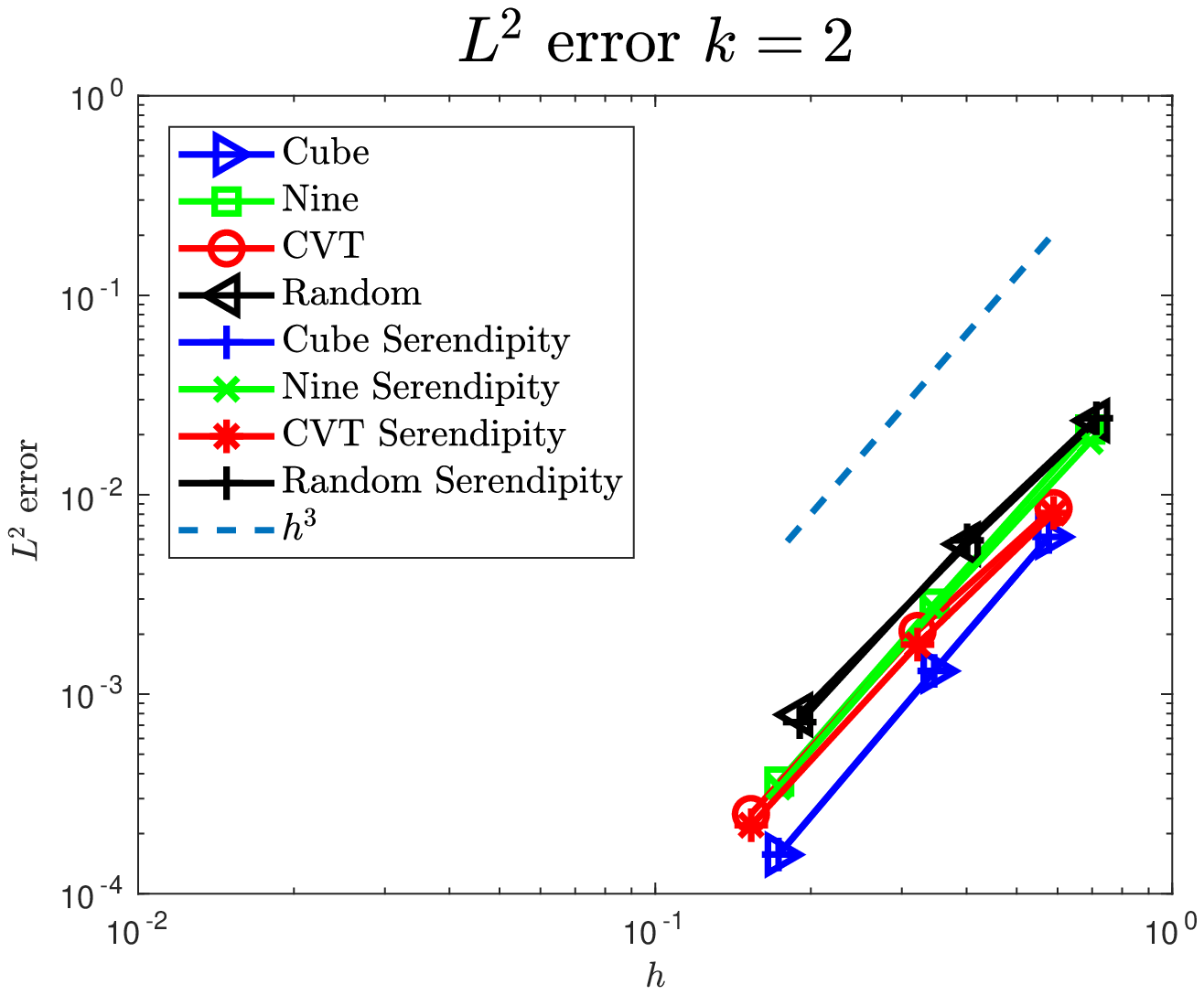}
\end{tabular}
\end{center}
\end{center}
\caption{Test case 2 - $L^2$-error for standard and serendipity approach: 
case $k=1$ and $k=2$.}
\label{fig:convEse2}
\end{figure}

\bibliographystyle{plain}

\bibliography{general-bibliography}

\end{document}